\DeclareMathOperator{\ess}{ess}
\DeclareMathOperator{\arccot}{arccot}
\newtheorem{thm}{Theorem}
\newtheorem{lem}{Lemma}
\newtheorem{deff}{Definition}
\newtheorem{cor}{Corollary}
\newtheorem{conj}{Conjecture}
\newtheorem{cl}{Claim}
\theoremstyle{definition}
\newtheorem{example}{Example}
\newtheorem{rmk}{Remark}
\begin{document}

\title[
Asymptotic behavior of support points for planar curves]
{Asymptotic behavior of support points\\ for planar curves}

\author{Yu.G.~Nikonorov}

\address{Nikonorov Yuri\u{i} Gennadyevich
\newline
South Mathematical Institute
\newline
of  Vladikavkaz Scientific Center
\newline
of the Russian Academy of Sciences,
\newline Vladikavkaz, Markus str., 22,
\newline
362027, Russia}

\email{nikonorov2006@mail.ru}

\begin{abstract}
In this paper we prove a universal inequality describing the
asymptotic behavior of support points for planar continuous
curves. As corollaries we get an analogous result for tangent
points of differentiable planar curves and some (partially known)
assertions on the asymptotic of the mean value points for various
classical analytic theorems. Some open questions are formulated.

\vspace{2mm}

\noindent 2000 Mathematical Subject Classification: 53A04
(primary), 26A24 (secondary).

\vspace{2mm} \noindent {\it Key words and phrases:} parametric
continuous planar curves, smooth curves, mean-value theorems.

\end{abstract}

\maketitle

\section{Introduction and main results}

A starting point of this project is a remarkable property of
mean-value points in the first integral mean-value theorem. Let
$f:[0,1] \rightarrow \mathbb{R}$ be a continuous function. For any
$x\in (0,1]$ consider $\xi(x)$ that is the maximum of $\tau \in
[0,x]$ such that $x \cdot f(\tau)=\int_0^x f(t)\, dt$. Then the
inequality $\varlimsup\limits_{x\to 0} \frac{\xi(x)}{x} \geq
\frac{1}{e}$ holds. This inequality was proposed by Professor
V.K.~Ionin and was proved at first in the paper \cite{MV1}.
Further, this result was generalized in various ways \cite{MV2,
MV3, MV4, MV5, MV6, MV7}. But in this paper we suggest another
point of view: the main object of our study are not functions, but
continuous curves (for example, the graphs of functions).

At first we refine a definition of support points (with respect to
a given chord) of a continuous parametric curve in spite of the
fact that this notion is quite natural and intuitively clear.

\begin{deff}\label{1}
Let $\gamma:(a,b) \rightarrow \mathbb{E}^2$ be a continuous
parametric curve in the Euclidean plane, $[c,d] \subset (a,b)$. We
say that a point $\gamma(\tau_0)$, $\tau_0\in [c,d]$ is a support
point for the chord $[\gamma(c),\gamma(d)]$ (if
$\gamma(c)\neq\gamma(d)$), if a straight line $l$ passing through
$\gamma(\tau_0)$ in parallel $[\gamma(c),\gamma(d)]$ is such that
for all $\tau$, rather close to $\tau_0$, the points
$\gamma(\tau)$ are in one and the same half-plane determined by
$l$. If $\gamma(c)=\gamma(d)$, then we set that any point
$\gamma(\tau_0)$ for $\tau_0\in [c,d]$ is a support point for the
(degenerate) chord $[\gamma(c),\gamma(d)]$.
\end{deff}

Note that our convention on the set of support points for
$\gamma(c)=\gamma(d)$ is stipulated by the universality of an
analytic description of such sets under this definition (see
below). It is possible to modify this definition, but in any case
it is not so important since the case of chords with zero length
is trivial (in some sense) for our questions.

Consider a (rectangular) Cartesian coordinate system $Oxy$ in the
plane $\mathbb{E}^2$. Then $\gamma(t)=(x(t),y(t))\in
\mathbb{R}^2$, $t\in (a,b)$. The fact that a point
$\gamma(\tau_0)$, $\tau_0\in [c,d]$, is a support point for the
chord $[\gamma(c),\gamma(d)]$ could be expressed in the following
form. Consider a function $\Phi:(a,b) \rightarrow \mathbb{R}$,
$$
\Phi(\tau)=\det \left( \begin{array}{cc}
x(d)-x(c) & y(d)-y(c) \\
x(\tau) & y(\tau) \\
\end{array}
\right).
$$
It is easy to see that a point $\gamma(\tau_0)$, $\tau_0\in
[c,d]$, is a support point for the chord $[\gamma(c),\gamma(d)]$
if and only if $\tau_0$ is a point of local extremum of the
function $\Phi$. Since $\Phi(c)=\Phi(d)$, then there is at least
one point of local extremum of the function under consideration on
the interval $(c,d)$. Moreover, if the curve $\gamma(t)$ is
differentiable at the point $\tau_0$, then
$$
\Phi^{\prime}(\tau_0)=\det \left( \begin{array}{cc}
x(d)-x(c) & y(d)-y(c) \\
x^{\prime}(\tau_0) & y^{\prime}(\tau_0) \\
\end{array}
\right)=0,
$$
that means the collinearity of the vectors  $\gamma(d)-\gamma(c)$
and $\gamma^{\prime}(\tau_0)$. Note also that the straight line
$l$ in Definition \ref{1} is a tangent line to the curve
$\gamma(t)$ at the point $\gamma(\tau_0)$ when
$\gamma(c)\neq\gamma(d)$ and $\gamma^{\prime}(\tau_0)\neq 0$.

Now we can formulate the main results of this paper.
Let $\gamma: [a,b) \rightarrow \mathbb{E}^2$, where $a,b \in
\overline{\mathbb{R}}=\mathbb{R}\cup \{-\infty,\infty\}$, be a
continuous parametric curve in the Euclidean plane, that is not a
constant in any neighborhood of the point $a$.

Further, by $D(t)$ we denote a distance between the points
$\gamma(a)$ and $\gamma(t)$.

For every $t\in (a,b)$ we denote by $S(t)$ a set of $\tau \in
(a,t]$ such that the point $\gamma(\tau)$ is a support point for
the chord $[\gamma(a),\gamma(t)]$. Now, consider
$$
DS(t)=\sup \{D(\tau)\,|\,\tau\in S(t)\}.
$$

The main object of our study is  the asymptotic of the ratio
$DS(t)/D(t)$ when $t \rightarrow a$. For a fixed $t$ the set
$S(t)\subset (a,t]$ could be organized quite complicated, and this
is evident from a geometric interpretation of this set. The case
$D(t)=0$ is exceptional. According to the definition, in this case
we get $S(t)=(a,t]$. Obviously, there exists $\tau \in S(t)=(a,t]$
with the property $D(\tau)>0$ (otherwise, the curve $\gamma$ is
constant on the interval $(a,t)$). Hence $DS(t)>0$ for a such $t$,
and we set $DS(t)/D(t)=\infty$ when $D(t)=0$.

For a fixed value $t$ it is possible to choose a curve with the
ratio $DS(t)/D(t)$ equal to a given positive number. On the other
hand, it is clear that this ratio could not be greater than $1$
for all values of a parameter. A rather less evident fact is that
the ratio $DS(t)/D(t)$ could not be less than some definite
positive number for all values of a parameter. An exact assertion
consists in the following theorem that is the main result of this
paper.

\begin{thm}\label{main0}
Let $\gamma: [a,b) \rightarrow \mathbb{E}^2$ be an arbitrary
continuous parametric curve. Then the inequality
\begin{equation}\label{eq0}
\varlimsup_{t\to a} \frac{DS(t)}{D(t)}\geq \frac{1}{e}
\end{equation}
holds, where, as usual, $e =\lim\limits_{n\to \infty} \left( 1
+\frac{1}{n}\right)^n$.
\end{thm}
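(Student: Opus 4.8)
The plan is to argue by contradiction: suppose $\varlimsup_{t\to a} DS(t)/D(t) < 1/e$, so there is a constant $q < 1/e$ and a value $t_0 \in (a,b)$ with $DS(t) \le q\, D(t)$ for all $t \in (a,t_0]$. The geometric meaning of $DS(t) \le q\, D(t)$ is that every support point $\gamma(\tau)$ for the chord $[\gamma(a),\gamma(t)]$ satisfies $D(\tau) \le q\, D(t)$; in particular $D(t) > 0$ there (else $DS(t)/D(t) = \infty$), so $\gamma$ has moved away from $\gamma(a)$ and stays away. I would set up an explicit recursion producing parameter values $t_0 > t_1 > t_2 > \cdots \to a$ together with the estimate $D(t_{n+1}) \le q\, D(t_n)$, or something close to it, which forces $D(t_n) \le q^n D(t_0) \to 0$; combined with continuity of $\gamma$ at $a$ this is consistent, so the contradiction must come from a \emph{lower} bound on how fast $D$ can shrink, which is where the constant $1/e$ (rather than, say, $0$) enters.

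The key mechanism is the extremal analysis of the auxiliary function $\Phi$ introduced before the theorem. Fix $t$ close to $a$ and work in coordinates adapted to the chord $[\gamma(a),\gamma(t)]$: take this chord along one axis, so that $\Phi(\tau)$ (suitably normalized by $D(t)$) is, up to sign, the signed distance of $\gamma(\tau)$ from the line through $\gamma(a)$ and $\gamma(t)$. Support points are exactly the local extrema of $\Phi$ on $[a,t]$, and since $\Phi(a) = \Phi(t)$ there is always an interior extremum. The hypothesis says: \emph{every} local extremum of $\Phi$ on $(a,t)$ occurs at a parameter $\tau$ with $D(\tau)$ small compared to $D(t)$. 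I would then exploit a monotonicity/convexity-type consequence: between consecutive support points $\Phi$ is monotone, and controlling the location of extrema controls the geometry of the curve's excursions, ultimately bounding $D$ at a well-chosen earlier parameter in terms of $D(t)$. Iterating the construction over a geometric sequence of scales and taking the product of the per-step ratios is what manufactures the factor $(1 - 1/n)^n \to 1/e$ in the limit; the sharp threshold $1/e$ is the $\varlimsup$ of such products, exactly as in the classical mean-value-point result of \cite{MV1}.

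Concretely, the iteration I envisage is: given $t_n$, choose $t_{n+1}$ so that $D$ on $[t_{n+1}, t_n]$ is comparable to, but strictly below, $D(t_n)$ — e.g. the last parameter before $t_n$ at which $D$ equals $\lambda D(t_n)$ for a parameter $\lambda \in (q,1)$ to be optimized — and then use the support-point hypothesis applied to the chord $[\gamma(a), \gamma(t_n)]$ together with the intermediate value theorem on $\Phi$ to locate a support point that must lie in a range where $D \ge \lambda D(t_n)$, contradicting $DS(t_n) \le q\,D(t_n)$ once $\lambda$ is chosen with $q < \lambda$. Balancing the finitely many constraints across $n$ steps and letting $n \to \infty$ turns the admissible range for $q$ into exactly $q \ge 1/e$.

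The main obstacle I anticipate is the combinatorial bookkeeping of the support set $S(t)$: for a general continuous curve $S(t)$ can be a complicated closed set (the author explicitly warns of this), so ``between consecutive support points'' is not literally available, and one must instead work with $DS(t) = \sup\{D(\tau) : \tau \in S(t)\}$ directly and argue that the sup is attained (compactness of $S(t)$ as a closed subset of $[a,t]$, continuity of $D$) and that the extremum structure of $\Phi$ still yields a support point in the desired high-$D$ region. Making the recursion robust against oscillation of $D$ near $a$ — i.e. choosing the $t_n$ as the \emph{largest} admissible parameters, mirroring the ``maximum of $\tau$'' in the classical formulation — is the delicate point, and getting the constant exactly $1/e$ rather than something weaker hinges on doing this choice optimally.
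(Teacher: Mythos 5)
Your central mechanism does not hold up. The decisive step, where you fix $\lambda\in(q,1)$ and claim that the intermediate value theorem applied to $\Phi$ ``locates a support point that must lie in a range where $D\geq\lambda D(t_n)$'', is false for a general curve: if it were true for every $\lambda<1$, the theorem would hold with constant $1$ in place of $1/e$, and the logarithmic spiral $\rho(t)=e^{\alpha t}$ (Example 1 of the paper) rules this out --- there every support point for the chord ending at $\gamma(t)$ satisfies $D(\tau)/D(t)=e^{-\alpha\cdot\arccot(\alpha)}$, which is bounded away from $1$, so for $\lambda$ close to $1$ no support point lies in your high-$D$ region. Nothing in the extremum structure of $\Phi$ forces a support point near the far end of the chord; all of $S(t)$ can sit at distance roughly $D(t)/e$ from $\gamma(a)$. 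Your fallback, that the contradiction should come from ``a lower bound on how fast $D$ can shrink'', also has no basis for an arbitrary continuous parameterization: $D(t)$ may tend to $0$ arbitrarily fast as $t\to a$, and such a bound only becomes available after a reparameterization that controls the angular progress of the curve around $\gamma(a)$, which your sketch never introduces. Finally, ``between consecutive support points $\Phi$ is monotone'' plus ``balancing constraints'' to manufacture $(1-1/n)^n\to 1/e$ is a heuristic; you never exhibit the one-step inequality whose iteration actually produces the constant $1/e$.

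For contrast, the paper's proof assumes $D(\tau)/D(t)\leq e^{-q}$ with $q>1$ for all $\tau\in S(t)$, first reduces to the case where the polar angle $\theta(t)$ about $\gamma(a)$ is strictly increasing (a local extremum of $\theta$ makes $t$ itself a support point, giving $DS(t)\geq D(t)$), then reparameterizes by the polar angle and works with $\phi=\ln\rho$, so the hypothesis becomes $\phi(t)-\phi(\tau)\geq q$. Support points are local extrema of $\tau\mapsto e^{\phi(\tau)}\sin(t-\tau)$; a local monotonicity claim near $t$ gives almost-everywhere differentiability of $\phi$ with $\phi(d)-\phi(c)\leq\int_c^d\phi'(\tau)\,d\tau$, and the angle $\alpha(t)=\arccot(\phi'(t))$ satisfies: either $t+\alpha(t)$ leaves the interval, or $\phi(t+\alpha(t))-\phi(t)\geq q$, which via $\tan x>x$ (this is exactly where the sharp constant enters) yields $\alpha(t)>q\,\alpha(\xi)$ for some $\xi\in(t,t+\alpha(t))$. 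Iterating this geometric decay of $\alpha$ leads to a dichotomy: either $\alpha\to 0$ along a sequence converging to an interior point, contradicting continuity of $\phi$ together with the jump $q$; or $\alpha$ is bounded below (and $a$ is then necessarily finite), so $\phi'\leq\cot(c)$ almost everywhere forces $\phi$ to be bounded below near $a$, contradicting $\phi(a)=-\infty$. None of these ingredients --- the angular reparameterization, the log-radius, the function $\alpha$, or the measure-theoretic differentiability statements needed because the curve is merely continuous --- appear in your proposal, so the gap is not bookkeeping but the main idea.
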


For a differentiable curve any support point is a tangent point
automatically. Therefore, the above theorem implies some
corresponding results for tangent points. Let us clarify the
statement of the problem.

Let $\gamma: [a,b) \rightarrow \mathbb{E}^2$, where $a,b \in
\overline{\mathbb{R}}=\mathbb{R}\cup \{-\infty,\infty\}$, be a
continuous parametric curve in the Euclidean plane such that for
every $t\in (a,b)$ there exists a non-zero derivative vector
$\gamma^{\prime}(t)$. Note that this vector defines a direction
of the tangent line to the considered curve at the point
$\gamma(t)$. If the derivative vector is continuous (with respect
to a parameter), then such a curve is called smooth regular,
but we do not assume the continuity of the derivative vector
in what follows (unless otherwise stipulated).

By analogy with general continuous curves, for every $t\in (a,b)$
we denote by $T(t)$ the set of $\tau \in (a,t]$ such that the
vector $\gamma^{\prime}(\tau)$ is collinear  to the vector
$\overrightarrow{\gamma(a)\gamma(t)}$. It is clear that the set
$T(t)$ is non-empty for every $t\in (a,b)$ (since even the set
$S(t)\subset T(t)$ is non-empty).

Let us consider the value
$$
DT(t)=\sup \{D(\tau)\,|\,\tau\in T(t)\}.
$$

We are interested in the asymptotic of the ratio
$DT(t)/D(t)$ when $t \rightarrow a$.  For a fixed $t$ (by analogy
with $S(t)$) the set $T(t)\subset (a,t]$ could be rather
complicated that follows from the geometric interpretation of this
set as a set of points $\tau \in (a,t]$, such that a tangent line
to the curve $\gamma$ at the point $\gamma(\tau)$ is parallel to
the chord $[\gamma(a), \gamma(t)]$. For example, for $D(t)=0$
we get $T(t)=(a,t]$, and we set $DT(t)/D(t)=\infty$ in this case.
This is motivated by the fact that for some $\tau \in (a,t)\subset
T(t)$ the inequality $D(\tau)>0$ holds (otherwise the derivative
vector $\gamma^{\prime}$ should be trivial on the interval
$(a,t)$) and, therefore, $DT(t)>0$ (compare with analogous
convention for $DS(t)/D(t)$ when $D(t)=0$).

It is clear that for some fixed value of $t$ (and for a curve
chosen specially) the ratio $DT(t)/D(t)$ could be equal to any
positive number. For a differentiable curve $\gamma$ we obviously
get the inclusion $S(t)\subset T(t)$ (but $S(t) \neq \emptyset$
under the condition) and, therefore, the inequality $DS(t) \leq
DT(t)$ for an arbitrary $t\in (a,b)$. Hence, the following result
(obtained at first in \cite{NikTang}) is an immediate consequence
of Theorem \ref{main0}.

\begin{thm}[\cite{NikTang}]\label{main1}
Let $\gamma: [a,b) \rightarrow \mathbb{E}^2$ be an arbitrary
continuous parametric curve with non-zero derivative vector
$\gamma^{\prime}(t)$ at every point $t\in (a,b)$. Then the
inequality
\begin{equation}\label{eq1}
\varlimsup_{t\to a} \frac{DT(t)}{D(t)}\geq \frac{1}{e}
\end{equation}
holds.
\end{thm}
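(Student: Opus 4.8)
The plan is to obtain Theorem~\ref{main1} directly from Theorem~\ref{main0}: for a curve that is differentiable with non-vanishing derivative on $(a,b)$ the set $T(t)$ contains $S(t)$, so no new estimate is needed and the whole quantitative content already sits in \eqref{eq0}.

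First I would fix $t\in(a,b)$ and, taking $c=a$ and $d=t$ in the construction above, recall the auxiliary function $\Phi$, which satisfies $\Phi(a)=\Phi(t)$. If $\tau_0\in(a,t)$ and $\gamma(\tau_0)$ is a support point for $[\gamma(a),\gamma(t)]$, then $\tau_0$ is a point of local extremum of $\Phi$; since $\gamma$ is differentiable at $\tau_0$, so is $\Phi$, whence $\Phi'(\tau_0)=0$, and by the determinant formula for $\Phi'$ recorded above this says exactly that $\gamma'(\tau_0)$ is collinear with $\overrightarrow{\gamma(a)\gamma(t)}$, i.e. $\tau_0\in T(t)$. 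The only remaining possibility is $\tau_0=t$; here one uses that $t$ lies in the interior of the domain $[a,b)$ and that $\gamma'(t)\neq 0$: if $\gamma'(t)$ were not collinear with $\overrightarrow{\gamma(a)\gamma(t)}$, then for $\tau$ on the two sides of $t$ the displacement $\gamma(\tau)-\gamma(t)=(\tau-t)\gamma'(t)+o(\tau-t)$ would have a nonzero component normal to the chord of opposite signs, so the points $\gamma(\tau)$ could not all remain in a single closed half-plane bounded by the line through $\gamma(t)$ parallel to the chord, contradicting the support property; hence $\gamma'(t)$ is collinear with $\overrightarrow{\gamma(a)\gamma(t)}$ and $t\in T(t)$ as well. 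Thus $S(t)\subset T(t)$ for every $t\in(a,b)$ (and both sets are non-empty, $\Phi$ having a local extremum on $(a,t)$), and since $D$ is the same function in the two definitions this gives $DS(t)\le DT(t)$ for all $t\in(a,b)$.

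Finally I would pass to the limit: for $t$ with $D(t)>0$ the bound $DS(t)\le DT(t)$ gives $DT(t)/D(t)\ge DS(t)/D(t)$, and for $D(t)=0$ both ratios equal $\infty$ by the conventions adopted above, so in all cases $DT(t)/D(t)\ge DS(t)/D(t)$; taking $\varlimsup_{t\to a}$ and applying Theorem~\ref{main0} yields $\varlimsup_{t\to a} DT(t)/D(t)\ge\varlimsup_{t\to a} DS(t)/D(t)\ge 1/e$, which is \eqref{eq1}. I expect no genuine obstacle in this argument, since the entire substance of the statement is carried by Theorem~\ref{main0}; the one place that deserves a line of care is the inclusion $S(t)\subset T(t)$ at the endpoint $\tau_0=t$, which is handled via the non-vanishing of $\gamma'(t)$ as above.
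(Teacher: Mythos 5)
Your proposal is correct and follows exactly the route the paper takes: the inclusion $S(t)\subset T(t)$ for a differentiable curve (via $\Phi'(\tau_0)=0$ at a local extremum), hence $DS(t)\leq DT(t)$, and then Theorem~\ref{main0} gives \eqref{eq1}, with the conventions for $D(t)=0$ handled the same way. The only difference is that you spell out the endpoint case $\tau_0=t$ and the non-emptiness of $S(t)$, which the paper dismisses as obvious; this is fine and adds no new idea.
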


In Section 2 we consider some examples illustrating the assertions
of Theorem \ref{main0} and Theorem \ref{main1}. According to these examples, the
inequalities (\ref{eq0}) and (\ref{eq1}) are best possible.

Note also that the inequalities (\ref{eq0}) and (\ref{eq1}) have
local character. Therefore,
the domain of definition $[a,b)$ of the curve $\gamma(t)$
in Theorem \ref{main0} or in Theorem \ref{main1} can be replaced
by any interval $[a,b_1)$, where $b_1 \in (a,b)$.

Under conditions of Theorem \ref{main0}, we can consider curve $\gamma_1:
[a_1,b_1) \rightarrow \mathbb{E}^2$ instead of the curve $\gamma:
[a,b) \rightarrow \mathbb{E}^2$, if $\gamma_1(t)=\gamma(g(t))$ for
some continuous bijective function $g:[a_1,b_1)\rightarrow [a,b)$.
In the case of Theorem \ref{main1}, it should be required (in addition) the
existence of positive derivative for the function $g$ on the
interval $(a_1,b_1)$. In other words, the assertions
of the above theorems  concern the geometry
of a nonparametric curve (that could be considered as a class of pairwise equivalent
parametric curves).
Further (e.~g., in the proof of Theorem
\ref{main0} in Section 3) we will use these properties repeatedly.

On the other hand, the results of Theorem \ref{main0} and Theorem
\ref{main1} may be used to study some special parameterizations
of a curve. In such a case we obtain a couple of (partially
known) results on the asymptotic of mean value points in some
classical differential and integral theorems (cf. Section 4). In
the last section we formulate some unsolved questions that can be used as a basis
for further investigations in the designated
direction. In particular, it would be desirable to hope that
results of this paper initiate more detailed study of
the asymptotic behaviour of the ratios $DS(t)/D(t)$ and $DT(t)/D(t)$.

For functions defined on an interval
$[\alpha, \beta] \subset \mathbb{R}$, we use limits and
derivatives at the point $\alpha$ ($\beta$, respectively) having in mind
right hand limits  and right derivatives (left hand limits and
left  derivatives, respectively). This natural convention allow
us to simplify the presentation.

\section{Some examples}

We will define curves with using of polar coordinates with a pole
in the initial point of the curve. Moreover, a parameter in the
following examples is the polar angle $t$, $t\in [a,b)$. A curve
$\gamma(t)$, $t\in [a,b)$ is defined completely by $t\mapsto
\rho(t)$ -- the distance function from the current point of the
curve to the pole ($\rho(a)=0$). The points $\tau \in
T(t)\subset(a,t]$ are determined by the equation
\begin{equation}\label{eq5}
\rho(\tau) \cos(t-\tau)-\rho^{\prime}(\tau) \sin(t-\tau)=0.
\end{equation}
Note also that the equality $D(t)=\rho(t)$ holds in all examples
below. Moreover, in all these examples, the equality $T(t)=S(t)$
holds for all possible values of the parameter $t$ (it is easy to check),
i.~e. all tangent points are also support
points for the corresponding chords of the curve.

\begin{example}\label{ex1}
Consider a curve defined by the equation $\rho(t)=e^{\alpha t}$,
where $\alpha >0$, the parameter is the polar angle, $t\in
[-\infty,b)$. It is clear that $t=\tau+\arccot(\alpha)+\pi n$ ($n
\in \mathbb{Z}$, $n \geq 0$) for $\tau \in T(t)=S(t)$. Hence,
$$
DS(t)=DT(t)=e^{\alpha t-\alpha\cdot\arccot(\alpha)}, \quad D(t)=\rho(t)=e^{\alpha t},
\quad \frac{DT(t)}{D(t)}=\frac{DS(t)}{D(t)}=e^{-\alpha\cdot \arccot(\alpha)}.
$$
It is necessary to note that $\alpha\cdot \arccot(\alpha) <1$ for
$\alpha >0$ and $\lim\limits_{\alpha \to \infty} \bigl(
\alpha\cdot \arccot(\alpha) \bigr)=1$. Therefore, the inequalities
in Theorem \ref{main0} and Theorem \ref{main1} are best possible.
\end{example}

\begin{example}\label{ex2}
Consider a curve defined by the equation $\rho(t)=t^{\alpha}$,
where $\alpha >0$, the parameter is the polar angle, $t\in [0,b)$.
For determining of $\tau \in T(t)=S(t)$ we have the equation
$$
\tau \cos(t-\tau)-\alpha \sin(t-\tau)=0.
$$
It is clear that $t=\tau+\arccot \left( \frac{\alpha}{\tau}
\right)+\pi n$ ($n \in \mathbb{Z}$, $n \geq 0$). Therefore,
$$
\lim\limits_{t \to 0}\frac{DT(t)}{D(t)}= \lim\limits_{t \to
0}\frac{DS(t)}{D(t)}=\lim\limits_{\tau \to 0}\frac{\tau^{\alpha}}{\left(\tau+\arccot
\left( \frac{\alpha}{\tau} \right) \right)^{\alpha}}=
$$ $$
\lim\limits_{\tau \to 0}\left( 1+\frac{1}{\tau} \arccot \left(
\frac{\alpha}{\tau}\right)\right)^{-\alpha}=\left(1 +\frac{1}{\alpha}\right)^{-\alpha}
>e^{-1}.
$$
Note also that $\lim\limits_{\alpha \to \infty} \left(1
+{\alpha}^{-1}\right)^{-\alpha}=e^{-1}$.
\end{example}

\begin{example}\label{ex3}
Let us set $l>1$, $[a,b)=[-\infty, 0)$, $\rho(t)=e^{\phi(t)}$,
where $\phi(t)=-|t|^l=-(-t)^l$, a parameter is the polar angle
again, $t\in [-\infty,0)$. Obviously,
$t=\tau+\arccot(\phi^{\prime}(\tau))+\pi n$ ($n \in \mathbb{Z}$,
$n \geq 0$) for $\tau \in T(t)=S(t)$ (cf. the equality
(\ref{eq5})). This implies immediately
$$
\lim\limits_{t \to -\infty} \frac{DT(t)}{D(t)}=\lim\limits_{t \to -\infty}
\frac{DS(t)}{D(t)}=e^L,
$$
where $L=\lim\limits_{s \to -\infty} \bigl(
\phi(s)-\phi(s+\alpha(s))\bigr)$ and
$\alpha(s)=\arccot(\phi^{\prime}(s))$. Since $\phi^{\prime
\prime}(t)\leq 0$, then
$$
\phi^{\prime}(s)\cdot \alpha(s) \geq \phi(s+\alpha(s))-\phi(s)\geq
\phi^{\prime}(s+\alpha(s))\cdot \alpha(s).
$$
Since
$$
\lim\limits_{\beta \to \infty} \bigl( \beta\cdot \arccot(\beta) \bigr)=1, \quad
\lim\limits_{s \to -\infty} \phi^{\prime}(s)=\infty, \quad \lim\limits_{s\to -\infty}
\frac{\phi^{\prime}(s+\alpha(s))}{\phi^{\prime}(s)}=1,
$$
then $L=-1$ and $\lim\limits_{t \to -\infty}
\frac{DT(t)}{D(t)}=\lim\limits_{t \to -\infty}
\frac{DS(t)}{D(t)}=e^{-1}$. This example helps to understand
better some steps in the proof of Theorem \ref{main0}.
\end{example}

A couple of other examples follows from Theorem \ref{main4} and
from Corollary \ref{cor1} due to the asymptotic equation
(\ref{odno.1}), because such examples are considered in various
papers, devoted to the asymptotic of mean value points in
classical mean value theorem \cite{MV1, MV2, MV3, MV4}. Under some
additional restrictions to the asymptotic of the curve $t
\rightarrow \gamma (t)$ at the point $a$, there exists a usual
limit $\lim\limits_{t\to a} \frac{DT(t)}{D(t)}\,\bigl( \geq e^{-1}
\bigr)$. Assertions of such kind for various integral and
differential mean value theorems are obtained in the papers
\cite{Abel, AbIv, Adik, Azpet, Jac, Mera, PoRiSa, Trif, Zhang, XCH}. Moreover, this
problematic is adequately depicted in the book \cite{SaRi}, where
one can find also extensive references.

\section{Proof of Theorem \ref{main0}}

Let us consider a Cartesian coordinate system $Oxy$ in
$\mathbb{E}^2$ such that $O=\gamma (a)$. Then
$\gamma(t)=(x(t),y(t))\in \mathbb{R}^2$, $t\in [a,b)$, and
$\gamma(a)=(x(a),y(a))=(0,0)$. The fact, that a point $\tau_0\in
(a,t]$ is in the set $S(t)$, can be expressed in the following
form. Consider a function $\Phi:[a,b) \rightarrow \mathbb{R}$,
\begin{equation}\label{phi}
\Phi(\tau)=\det \left(
\begin{array}{cc}
x(t) & y(t) \\
x(\tau) & y(\tau) \\
\end{array}
\right). \end{equation} Then a point $\tau_0\in (a,t]$ is in the
set $S(t)$ if and only if $\tau_0$ is a point of local extremum of
the function $\tau \mapsto \Phi(\tau)$.

In the rest of this section we prove Theorem \ref{main0}. {\it
Further, we suppose that the assertion of Theorem \ref{main0} does
not hold, and get the contradiction.}
\smallskip

Without loss of generality we may assume that $\gamma(t)\neq
\gamma(a)$ for all $t\in (a,b)$. Indeed, if there is a sequence
of points $t_n \in (a,b)$ such that $t_n\to a$ as $n \to \infty$
and $\gamma(t_n)=\gamma(a)$, then
$\varlimsup\limits_{t\to a} \frac{DT(t)}{D(t)}= \infty>\frac{1}{e}$
(in this case $D(t_n)=0$ and
$DS(t_n)/D(t_n)=\infty$ according to our arrangements discussed just
before the statement of Theorem \ref{main0}), that is impossible.
Therefore, numbers $t\in (a,b)$
with the property $\gamma(t)=\gamma(a)$ cannot be
close to $a$ as much as possible. Therefore, if we decrease (if necessary)
the number $b$, then we get that such points $t$ are absent.

Let us consider functions $\rho, \theta:[a,b) \rightarrow
\mathbb{R}$, defined in the following way. Put $\rho(t)=D(t)$ --
the distance between $O$ and a current point of the curve
$\gamma(t)$. As $\theta(t)$ we consider a number satisfied to
equations $x(t)=\rho(t)\cos(\theta(t))$ and $y(t)=\rho(t)\sin(\theta(t))$.
Such a number (the polar angle) is defined uniquely up to a summand
$2\pi n$ ($n\in \mathbb{Z}$). Taking into account the continuity
of $\gamma(t)$, it is easy to choose this angle
in such a way that the function $t \mapsto \theta(t)$ is continuous for all values
of $t$.

Let us show that we may assume $\theta(t)$ to be strictly
increasing. In our new notations, the function $\Phi$ (cf. the
equality (\ref{phi})) has the following form:
$$
\Phi(\tau)=\rho(t)\rho(\tau)\sin\Bigl(\theta(\tau)-\theta(t)\Bigr).
$$
If $t$ is a point of local maximum (minimum) of the function $\tau
\mapsto \theta (\tau)$, then it is also a point of local minimum
(maximum, respectively) of the function $\tau \mapsto \Phi(\tau)$,
$t \in S(t)$, and $DS(t)\geq D(t)$. Therefore (cf. reasonings
above), such points can not be close to $a$ as much as posiible. Decreasing
(if necessary) the number $b$, we may assume  that
the point $t$ is not a point of local extremum of the
function $\tau \mapsto \theta (\tau)$ for all $t \in
(a,b)$. Taking into account the
continuity of this function, we get that it is either strictly
decreasing or strictly increasing on the interval $(a,b)$.
Replacing (if necessary) the ray $Oy$ with the opposite ray
(hence, changing the orientation), we may assume that this
function is strictly increasing for $t\in (a,b)$.

Now, we may change (without loss of generality) the parameter
$t$ in such a way that $\theta(t)=t$ for all $t\in (a,b)$, i.~e.
the curve under consideration is parameterized by the polar angle.
Further, it will be convenient to consider a function
$$
\phi(t)=\ln (\rho(t)).
$$
Since $\rho(t)=e^{\phi(t)}$, then (according to our conclusions
and assumption on $\rho(t)$) the function $\phi:[a,b) \rightarrow
\mathbb{\overline{R}}$ is continuous, takes finite values for
$t\in (a,b)$, and $\phi(a)=-\infty$. Further, we determine some other
properties of this function.

Since we supposed Theorem \ref{main0} to be false, then we may
assume that there is a number $q>1$ such that
$$
\frac{D(\tau)}{D(t)}\leq e^{-q}
$$
for all $t\in(a,b)$ and all $\tau \in S(t)$.
In our notations $D(t)=e^{\phi(t)}$, hence this inequality
is equivalent to the following one:
\begin{equation}\label{eq3}
\phi(t)-\phi(\tau) \geq q>1
\end{equation}
for all $t\in (a,b)$ and for all $\tau\in S(t)$.

Now, since $x(t)=e^{\phi(t)}\cos(t)$ and
$y(t)=e^{\phi(t)}\sin(t)$, then  a point $\tau \in (a,t]$ is in
the set $S(t)$ if and only if $\tau$ is a point of local extremum
of the function
$$
\tau \mapsto -(\rho(t))^{-1}
\Phi(\tau)=\rho(\tau)\sin(t-\theta)=e^{\phi(\tau)}\sin(t-\tau).
$$
For a fixed $t$ we consider an interval $I(t)=[\max\{t-\pi,
a\},t]$. It is clear that the function $\tau \mapsto
e^{\phi(\tau)}\sin(t-\tau)$ is vanished at the endpoints of this
interval. Therefore, there is at least one point of extremum of
the latter function (i.~e. a point in the set $S(t)$) in the
interior of this interval.

Further, we consider the function
$$
\tau \mapsto \phi(\tau)+\ln
(\sin(t-\tau))=\ln(e^{\phi(\tau)}\sin(t-\tau))=:F^t(\tau).
$$

\begin{cl}\label{cl1} For every $t\in(a,b)$, there is $\beta(t)>0$ such that the function
$\tau \mapsto F^t(\tau)$ is strictly decreasing on the interval
$[t-\beta(t), t]$.
\end{cl}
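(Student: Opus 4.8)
The plan is to argue by contradiction, using only the continuity of $\phi$ together with the standing assumption~(\ref{eq3}). Fix $t\in(a,b)$ and suppose the claim is false, so that for every $\beta>0$ the function $\tau\mapsto F^t(\tau)$ is not strictly decreasing on $[t-\beta,t]$. First I would record the elementary local behaviour of $F^t$ near $t$: for $\beta<\min\{\pi,\,t-a\}$ we have $t-\tau\in(0,\pi)$ on $(t-\beta,t)$, hence $\sin(t-\tau)>0$ and $F^t(\tau)=\phi(\tau)+\ln\sin(t-\tau)$ is continuous and finite there; moreover $F^t(\tau)\to-\infty$ as $\tau\to t^-$, since $\phi$ is continuous and finite at $t$ while $\ln\sin(t-\tau)\to-\infty$. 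As $F^t(t)$ is to be read as the left limit $-\infty$, any failure of strict monotonicity on $[t-\beta,t]$ must already occur among points of $[t-\beta,t)$.

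The core of the argument is then a purely real-variable fact: a function that is continuous on $(t-\beta,t)$, tends to $-\infty$ at $t^-$, and is not strictly decreasing on $[t-\beta,t)$ must have a (not necessarily strict) local maximum point in the open interval $(t-\beta,t)$. I would prove this by choosing $\sigma<\sigma'$ in $[t-\beta,t)$ with $F^t(\sigma)\le F^t(\sigma')$ (available from the failure of strict monotonicity), then choosing $\sigma''\in(\sigma',t)$ with $F^t(\sigma'')<F^t(\sigma)$ (available since $F^t\to-\infty$), and examining where the continuous function $F^t$ attains its maximum on the compact interval $[\sigma,\sigma'']$. The strict inequality rules out the right endpoint $\sigma''$; if the maximum is attained at $\sigma$ one deduces $F^t(\sigma)=F^t(\sigma')$, so that $\sigma'$ is itself an interior maximum point; otherwise the maximum is already attained at an interior point. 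In every case one obtains a local maximum point $\tau_\beta$ of $F^t$ lying in $(t-\beta,t)\subset(a,t]$.

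Next I would translate this back into the language of support points. Since $-\rho(t)^{-1}\Phi(\tau)=e^{\phi(\tau)}\sin(t-\tau)=e^{F^t(\tau)}$ and $\rho(t)>0$, a local maximum point of $F^t$ is a local maximum point of $\tau\mapsto e^{\phi(\tau)}\sin(t-\tau)$, hence a point of local extremum of $\Phi$, and therefore lies in $S(t)$; thus $\tau_\beta\in S(t)$. Choosing $\beta=1/n$ yields a sequence $\tau_n\in S(t)$ with $\tau_n\to t^-$. Assumption~(\ref{eq3}) gives $\phi(t)-\phi(\tau_n)\ge q>1$ for every $n$, whereas the continuity of $\phi$ at $t$ forces $\phi(t)-\phi(\tau_n)\to0$. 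This contradiction proves the claim.

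I expect the only genuinely delicate step to be the real-variable fact in the second paragraph: one must ensure that the mere failure of strict monotonicity produces an honest local maximum in the interior rather than at an endpoint, which is exactly why the auxiliary point $\sigma''$ (obtained from the blow-up $F^t\to-\infty$) and the separate treatment of the case where the maximum sits at $\sigma$ are needed. The remaining ingredients --- the positivity of $\rho$, the identification of $S(t)$ with the local extrema of $\Phi$, and the uniform gap~(\ref{eq3}) --- enter in a completely routine way.
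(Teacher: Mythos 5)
Your argument is correct and follows essentially the same route as the paper: assume failure, use $F^t(\tau)\to-\infty$ as $\tau\to t^-$ to produce a maximum point of $F^t$ arbitrarily close to $t$, note that such a point lies in $S(t)$, and contradict (\ref{eq3}) via the continuity of $\phi$ at $t$. Your explicit treatment of the case where the maximum sits at the left endpoint is a welcome refinement of a step the paper leaves implicit, but it does not change the method.
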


\begin{proof}
Suppose the contrary. Then there are sequences of numbers
$\{\tau_n\}$ and $\{\xi_n\}$ such that $\tau_n <\xi_n <t$ and
$F^t(\tau_n)\leq F^t(\xi_n)$ for all $n$, $\tau_n \to t$ as $n\to
\infty$. Since $F^t(\tau) \to -\infty$ as $\tau \to t-0$, then
there is a point $\eta_n$ of absolute maximum of the function
$F^t$ on the interval $[\tau_n, t)$. Clear, that $\eta_n\in S(t)$
and $\eta_n \to t$ as $n\to \infty$. But according to our
assumption,  the inequality $\phi(t)-\phi(\eta_n)>q$
(the inequality (\ref{eq3})) holds for all $n$, that is impossible
(it suffices to pass to the limit in this inequality). Therefore, we
have proved the existence of the required $\beta(t)>0$ (it is
easy to see also that $\beta(t)<\pi$).
\end{proof}

\begin{rmk}\label{re2}
Recall that every increasing function $f:[\alpha,\beta]\subset
\mathbb{R} \rightarrow \mathbb{R}$ is differentiable almost
everywhere, its derivative is non-negative and summable, and
$\int\limits_{\alpha}^{\beta} f^{\prime}(t) dt \leq f(\beta) -
f(\alpha)$. Moreover, if  $f$ has derivative {\it at every point} of
the interval $[\alpha, \beta]$, then the above inequality becomes
an equality (in this case the function $x\mapsto f(x)$ is
absolutely continuous on the interval $[\alpha,\beta]$), cf.
\cite{Nat, RiS}.
\end{rmk}

\begin{cl}\label{cl2} For any $t\in(a,b)$ the function $\phi$ is differentiable almost everywhere
on every interval $[c,d] \subset (t-\beta(t), t)$, its derivative
is summable and
$$
\phi(d)-\phi(c) \leq \int\limits_c^d \phi^{\prime} (\tau) d\tau.
$$
\end{cl}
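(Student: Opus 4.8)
The plan is to write $\phi$ on $[c,d]$ as the difference of a monotone function and a smooth function, then apply Remark \ref{re2} to the monotone part and the fundamental theorem of calculus to the smooth part.

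First I would fix $t\in(a,b)$ and $[c,d]\subset(t-\beta(t),t)$, and recall from Claim \ref{cl1} that $F^t(\tau)=\phi(\tau)+\ln(\sin(t-\tau))$ is strictly decreasing on $[t-\beta(t),t]$, hence in particular real-valued and monotone on $[c,d]\subset[t-\beta(t),t]$. Applying Remark \ref{re2} to the increasing function $-F^t$, I obtain that $F^t$ is differentiable almost everywhere on $[c,d]$, that $(F^t)^{\prime}$ is summable (with $(F^t)^{\prime}\le 0$ a.e.), and, reversing the inequality of Remark \ref{re2},
$$
\int_c^d (F^t)^{\prime}(\tau)\,d\tau \ge F^t(d)-F^t(c).
$$

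Next I would treat the term $h(\tau):=\ln(\sin(t-\tau))$. For $\tau\in[c,d]$ one has $t-\tau\in[t-d,t-c]\subset(0,\beta(t))\subset(0,\pi)$, using $c>t-\beta(t)$, $d<t$, and the bound $\beta(t)<\pi$ established in Claim \ref{cl1}. Hence $\sin(t-\tau)>0$ on all of $[c,d]$ and $h$ is of class $C^{\infty}$ there; in particular $h$ is differentiable at every point of $[c,d]$, its derivative $h^{\prime}(\tau)=-\cot(t-\tau)$ is continuous and therefore summable, and the fundamental theorem of calculus yields $\int_c^d h^{\prime}(\tau)\,d\tau=h(d)-h(c)$.

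Finally I would combine the two pieces via the identity $\phi=F^t-h$ on $[c,d]$: then $\phi$ is differentiable almost everywhere on $[c,d]$ as a difference of two functions with this property, $\phi^{\prime}=(F^t)^{\prime}-h^{\prime}$ almost everywhere, and $\phi^{\prime}$ is summable as a difference of summable functions. Integrating and inserting the two relations above gives
$$
\int_c^d \phi^{\prime}(\tau)\,d\tau=\int_c^d (F^t)^{\prime}(\tau)\,d\tau-\int_c^d h^{\prime}(\tau)\,d\tau\ge\bigl(F^t(d)-F^t(c)\bigr)-\bigl(h(d)-h(c)\bigr)=\phi(d)-\phi(c),
$$
which is the desired inequality. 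There is no real obstacle here; the only points requiring attention are keeping $[c,d]$ strictly inside $(t-\beta(t),t)$ so that $\ln(\sin(t-\tau))$ remains smooth and bounded on $[c,d]$ (this is precisely why the statement forbids the endpoints $t-\beta(t)$ and $t$), and correctly flipping the inequality of Remark \ref{re2} when passing from the increasing function $-F^t$ back to $F^t$.
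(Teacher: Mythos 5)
Your proposal is correct and follows essentially the same route as the paper: decompose $\phi=F^t-\ln(\sin(t-\cdot))$, apply the properties of increasing functions (Remark \ref{re2}) to $-F^t$, and use the smoothness and absolute continuity of $\tau\mapsto\ln(\sin(t-\tau))$ on $[c,d]\subset(t-\beta(t),t)\subset(t-\pi,t)$. You merely spell out the details (the sign flip in Remark \ref{re2} and the bound $\beta(t)<\pi$ from Claim \ref{cl1}) that the paper leaves as ``we easily get''.
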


\begin{proof}
According to Claim \ref{cl1} the function $\tau \mapsto
F^t(\tau)=\phi(\tau)+\ln (\sin(t-\tau))$ decreases on the interval
$[t-\beta(t), t]$. Therefore, the function $-F^t$ increases on
this interval. Using properties of increasing functions (cf.
Remark \ref{re2}), differentiability and absolute continuity of
the function $\tau \mapsto \ln (\sin(t-\tau))$ on the interval
$[c,d]\subset (t-\beta(t), t)\subset (t-\pi, t)$, we easily get
the required properties of the function $\phi$.
\end{proof}

\begin{cl}\label{cl3} The function $\phi$ is differentiable almost everywhere on the interval $(a,b)$.
Its derivative $\phi^{\prime}$ is summable on every interval
$[c,d] \subset (a,b)$ and satisfies the inequality
$$
\phi(d)-\phi(c) \leq \int\limits_c^d \phi^{\prime} (\tau) d\tau.
$$
\end{cl}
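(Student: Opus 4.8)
The plan is to upgrade the local statement of Claim \ref{cl2} to a global one on $(a,b)$ by a covering/connectedness argument. First I would fix an arbitrary compact interval $[c,d]\subset(a,b)$ and cover it by the open intervals $(t-\beta(t),t)$ as $t$ ranges over $(a,b)$; more precisely, for each $s\in[c,d]$ pick a $t=t(s)>s$ with $s\in(t-\beta(t),t)$ (such a $t$ exists: take any $t\in(s,b)$ with $t-s<\beta(t)$, which is possible since $\beta(t)>0$). These open intervals cover $[c,d]$, so by compactness finitely many of them, say $(t_i-\beta(t_i),t_i)$ for $i=1,\dots,N$, already cover $[c,d]$. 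By Claim \ref{cl2}, on each interval $[c_i,d_i]\subset(t_i-\beta(t_i),t_i)$ the function $\phi$ is differentiable almost everywhere, $\phi'$ is summable, and $\phi(d_i)-\phi(c_i)\le\int_{c_i}^{d_i}\phi'(\tau)\,d\tau$.

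The next step is to patch these local inequalities together. Since the finite family of open intervals covers the connected set $[c,d]$, I can choose a partition $c=a_0<a_1<\dots<a_m=d$ such that each subinterval $[a_{j-1},a_j]$ is contained in one of the covering intervals (this is the standard Lebesgue-number argument). On each $[a_{j-1},a_j]$ Claim \ref{cl2} applies, giving almost-everywhere differentiability of $\phi$ there, summability of $\phi'$ on $[a_{j-1},a_j]$, and $\phi(a_j)-\phi(a_{j-1})\le\int_{a_{j-1}}^{a_j}\phi'(\tau)\,d\tau$. Summing over $j=1,\dots,m$, the left side telescopes to $\phi(d)-\phi(c)$ and the right side becomes $\int_c^d\phi'(\tau)\,d\tau$ (the integral being finite since it is a finite sum of finite integrals), yielding
$$
\phi(d)-\phi(c)\le\int_c^d\phi'(\tau)\,d\tau.
$$
Since almost-everywhere differentiability on each of finitely many subintervals gives almost-everywhere differentiability on their union $[c,d]$, and $[c,d]\subset(a,b)$ was arbitrary, exhausting $(a,b)$ by an increasing sequence of such compacta shows $\phi$ is differentiable almost everywhere on all of $(a,b)$ and $\phi'$ is summable on every $[c,d]\subset(a,b)$.

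I do not expect any serious obstacle here; the only point requiring a little care is the choice of the partition subordinate to the finite subcover, but that is a routine compactness argument (one may also avoid explicit partitions by noting that for overlapping intervals the local inequalities are consistent because $\phi'$ is defined pointwise a.e.\ independently of which interval is used). The genuinely substantive work was already done in Claims \ref{cl1} and \ref{cl2}; Claim \ref{cl3} is purely a globalization of that local information.
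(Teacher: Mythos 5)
Your argument is essentially the same as the paper's: cover the compact interval $[c,d]$ by the intervals $(t-\beta(t),t)$ from Claim \ref{cl1}, extract a finite subcover, choose a partition of $[c,d]$ subordinate to it, apply Claim \ref{cl2} on each piece, and telescope, then exhaust $(a,b)$ by such compacta. The only point treated lightly (in your write-up and in the paper alike) is why every $s\in[c,d]$ lies in some $(t-\beta(t),t)$ with $t>s$; your parenthetical ``possible since $\beta(t)>0$'' is no more than the paper's own bare assertion of the covering, so the proposal matches the paper in both substance and level of detail.
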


\begin{proof}
For every $t\in (a,b)$ we consider the interval
$I(t):=(t-\beta(t),t)$ (Claim \ref{cl1}). All these intervals
cover jointly the interval $[c,d]$. By compactness,
$[c,d]$ is covered also by some finite subset of the intervals
$I(t)$, say, by  $I(t_0), I(t_1), \dots, I(t_l)$, $t_0<t_1 <
\cdots <t_l$. Now, choose numbers $s_i$, $i=0,\dots, l$, such that
$c=s_0<s_1<\cdots <s_{l-1}=d$ and $[s_i,s_{i+1}]\subset I(t_i)$.
According to Claim \ref{cl2} the function $\phi$ is differentiable
almost everywhere on every interval $[s_i,s_{i+1}]$, and the
inequality
$$
\phi(s_{i+1})-\phi(s_i) \leq \int_{s_i}^{s_{i+1}} \phi^{\prime}
(\tau) d\tau
$$
holds. Hence, $\phi$ is differentiable almost everywhere on the
interval $[c,d]$. Summing the obtained inequalities by $i$ from
$0$ to $s-1$, we get an analogous inequality on the interval
$[c,d]$. Since the interval $[c,d] \subset (a,b)$ is arbitrary,
the function $\phi$ is differentiable almost everywhere on the
interval $(a,b)$.
\end{proof}

Further, it will be helpful to consider the set
$$
Sm =\{t \in (a,b)\,|\, \mbox{there exists}\, \phi^{\prime}(t) \in \mathbb{R} \}.
$$
Consider also the function $\alpha: Sm\rightarrow \mathbb{R}$,
defined by the equation
$$
\alpha(t)=\arccot(\phi^{\prime}(t)).
$$
It is clear that $\alpha(t)\in (0,\pi)$ for all values of the
parameter.

\begin{cl}\label{cl4} For every $\tau \in Sm$ either the inequality
$\tau+\alpha(\tau)\geq b$, or the inequality
$\phi(\tau+\alpha(\tau))-\phi(\tau)\geq q$ holds.
\end{cl}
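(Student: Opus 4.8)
The plan is to argue by contradiction, exploiting the standing hypothesis~(\ref{eq3}) through a continuity argument localized near $\tau_0$ and near the chord $[\gamma(a),\gamma(t_1)]$, where $t_1:=\tau_0+\alpha(\tau_0)$ (recall $\alpha(\tau_0)=\arccot(\phi^{\prime}(\tau_0))\in(0,\pi)$, so that $\phi^{\prime}(\tau_0)=\cot(\alpha(\tau_0))$). If $t_1\ge b$ there is nothing to prove, so I assume $t_1<b$ and suppose, contrary to the claim, that $\phi(t_1)-\phi(\tau_0)<q$. By continuity of $\phi$ I first fix $\varepsilon>0$ so small that $\phi(t)-\phi(\sigma)<q$ for all $t\in[t_1-\varepsilon,t_1+\varepsilon]$ and all $\sigma\in[\tau_0-\varepsilon,\tau_0+\varepsilon]$, and in addition $2\varepsilon<\alpha(\tau_0)$ and $\alpha(\tau_0)+2\varepsilon<\pi$; these size restrictions keep $t-\sigma$ inside a compact subinterval of $(0,\pi)$ for all such $t,\sigma$, so that $\tau\mapsto\ln\sin(t-\tau)$ is smooth there and $[\tau_0-\varepsilon,\tau_0+\varepsilon]\subset(t-\pi,t)\cap(a,t)$. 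Then~(\ref{eq3}) together with the fact that $\sigma\in S(t)$ exactly when $\sigma$ is a point of local extremum of $e^{\phi(\cdot)}\sin(t-\cdot)$ — equivalently of $F^t$ wherever that function is positive — forces, for every $t\in[t_1-\varepsilon,t_1+\varepsilon]$, that $F^t$ has no point of local extremum in the open interval $(\tau_0-\varepsilon,\tau_0+\varepsilon)$.

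Now a continuous function on a closed interval with no (weak) local extremum at any interior point is injective there, and in particular is nowhere locally constant; hence it is strictly monotone. Thus for each $t\in[t_1-\varepsilon,t_1+\varepsilon]$ the function $F^t$ is strictly monotone on $[\tau_0-\varepsilon,\tau_0+\varepsilon]$. To pin down the direction I would use differentiability of $\phi$ at $\tau_0$: since $F^t(\tau_0+u)-F^t(\tau_0)=\bigl(\phi^{\prime}(\tau_0)-\cot(t-\tau_0)\bigr)u+o(u)$ as $u\to0$, and $\cot$ is strictly decreasing on $(0,\pi)$ with $\phi^{\prime}(\tau_0)=\cot(\alpha(\tau_0))$, the leading coefficient is positive exactly when $t>t_1$ and negative exactly when $t<t_1$. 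Combined with the strict monotonicity just established, this gives: $F^t$ is strictly increasing on $[\tau_0-\varepsilon,\tau_0+\varepsilon]$ for $t\in(t_1,t_1+\varepsilon]$, and strictly decreasing there for $t\in[t_1-\varepsilon,t_1)$.

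To finish I would consider the continuous function $H(t):=F^t(\tau_0+\varepsilon)-F^t(\tau_0-\varepsilon)=\phi(\tau_0+\varepsilon)-\phi(\tau_0-\varepsilon)+\ln\bigl(\sin(t-\tau_0-\varepsilon)/\sin(t-\tau_0+\varepsilon)\bigr)$ on $[t_1-\varepsilon,t_1+\varepsilon]$. The previous paragraph shows $H(t)>0$ for $t\in(t_1,t_1+\varepsilon]$ and $H(t)<0$ for $t\in[t_1-\varepsilon,t_1)$, whence $H(t_1)=0$ by continuity; but strict monotonicity of $F^{t_1}$ on $[\tau_0-\varepsilon,\tau_0+\varepsilon]$ gives $H(t_1)=F^{t_1}(\tau_0+\varepsilon)-F^{t_1}(\tau_0-\varepsilon)\neq0$. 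This contradiction establishes the claim.

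The delicate point is the implication from ``no local extremum at any interior point'' to ``strict monotonicity'': one must use the weak notion of local extremum, so that a subinterval on which $F^t$ were constant would consist of points of $S(t)$, which is excluded by the choice of $\varepsilon$; the rest is the routine bookkeeping that keeps every argument of $\ln\sin$ safely inside $(0,\pi)$. Note that if $\tau_0$ happened to be a genuine support point of the chord $[\gamma(a),\gamma(t_1)]$, the inequality would follow at once from~(\ref{eq3}); the argument above is needed precisely for the ``inflection'' case $\tau_0\notin S(t_1)$, and it handles both cases at the same time.
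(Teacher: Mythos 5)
Your argument is correct, and it takes a genuinely different route from the paper's. The paper splits into two cases: if $\tau_0\in S(t_0)$ (with $t_0=\tau_0+\alpha(\tau_0)$) the inequality (\ref{eq3}) applies directly; in the ``inflection'' case it perturbs the chord by choosing $\tau_n\to\tau_0$, $t_n\to t_0$ with $[O,\gamma(t_n)]$ parallel to the secant $[\gamma(\tau_0),\gamma(\tau_n)]$, produces by a Rolle-type argument for the determinant function a point $\eta_n\in S(t_n)$ between $\tau_0$ and $\tau_n$, applies (\ref{eq3}) to $\eta_n$, and passes to the limit using continuity of $\phi$. You instead perturb the parameter $t$ of the chord endpoint: assuming $\phi(t_1)-\phi(\tau_0)<q$, continuity of $\phi$ plus (\ref{eq3}) forbids any local extremum of $F^t$ in a fixed neighborhood of $\tau_0$ for all $t$ near $t_1$, whence (your ``no interior weak extremum $\Rightarrow$ injective $\Rightarrow$ strictly monotone'' step, which is sound provided the weak notion of extremum is used, as you note) each $F^t$ is strictly monotone there; differentiability of $\phi$ at $\tau_0$ pins the direction, which flips as $t$ crosses $t_1$, and the continuity of $H(t)=F^t(\tau_0+\varepsilon)-F^t(\tau_0-\varepsilon)$ then forces $H(t_1)=0$, contradicting strict monotonicity of $F^{t_1}$. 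This handles the extremum and non-extremum cases uniformly and avoids the paper's construction of the parallel-secant sequences (which implicitly relies on the polar-angle parameterization to produce the $t_n$); the paper's argument, in exchange, is shorter and more geometric. Two trivial bookkeeping items you should add to your size restrictions on $\varepsilon$: also require $t_1+\varepsilon<b$ (so that $S(t)$ and (\ref{eq3}) are available for every $t\in[t_1-\varepsilon,t_1+\varepsilon]$) and $\tau_0-\varepsilon>a$; both are harmless.
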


\begin{proof}
Let us fix some $\tau_0 \in Sm$ and suppose that
$t_0:=\tau_0+\alpha(\tau_0)<b$. If the point $\tau_0$ is a point
of local extremum of the function $\tau \mapsto
e^{\phi(\tau)}\sin(t_0-\tau)$, then $\tau_0 \in S(t_0)$ and
(according to our assumptions) the inequality
$\phi(t_0)-\phi(\tau_0)> q$ (the inequality (\ref{eq3})) holds,
that implies the required result. However, $\tau_0$ should not
be a point of local extremum of the above function, but in any case, the point
$\tau_0$ is a critical point of the function $\tau \mapsto e^{\phi(\tau)}\sin(t_0-\tau)$
($\alpha(\tau_0)=\arccot(\phi^{\prime}(\tau_0))$ by definition).
In other word, the tangent line to
the curve $\gamma(t)$ at the point $\gamma(\tau_0)$ is parallel to
the chord $[O=\gamma(a),\gamma(t_0)]$.

Now, choose sequences of numbers $\{\tau_n\}$ and $\{t_n\}$ such
that $\tau_n \to \tau_0$, $t_n \to t_0$ as $n \to \infty$ and the
chord $[O, \gamma(t_n)]$ is parallel to the chord
$[\gamma(\tau_0),\gamma(\tau_n)]$ for all $n$. Since $\tau_0 \in Sm$, then
$$
\frac{1}{\tau_n-\tau_0}
\overrightarrow{\gamma(\tau_n)\gamma(\tau_0)}\to
\gamma^{\prime}(\tau_0) \,\mbox{    as   }\, n \to \infty.
$$
Let us show that for every $n$ there exists a number $\eta_n\in S(t_n)$
between the numbers $\tau_0$ and $\tau_n$. Such a number should be
a point of local extremum of the function $\tau \mapsto
e^{\phi(\tau)}\sin(t_n-\tau)$. For this goal we consider the
function $\Psi(\tau)=\det \left(
\begin{array}{c}
\gamma(\tau_n)-\gamma(\tau_0) \\
\gamma(\tau) \\
\end{array}
\right)$. Since $\Psi(\tau_0)=\Psi(\tau_n)$, then there is a point
$\eta_n$ of local extremum of this function between the points
$\tau_0$ and $\tau_n$. But the same point is also a point of local
extremum of the function
$$
\tau \mapsto \det \left(
\begin{array}{c}
\gamma(t_n) \\
\gamma(\tau) \\
\end{array}
\right)= \det \left(
\begin{array}{cc}
e^{\phi(t_n)}\cos(t_n)&e^{\phi(t_n)}\sin(t_n) \\
e^{\phi(\tau)}\cos(\tau)&e^{\phi(\tau)}\sin(\tau)  \\
\end{array}
\right)=\Bigl(-e^{\phi(t_n)} \Bigr) e^{\phi(\tau)}\sin(t_n-\tau),
$$
i.~e. $\eta_n\in S(t_n)$. According to the inequality (\ref{eq3})
we get $\phi(t_n)-\phi(\eta_n)> q$ for $n$. Since $\eta_n \to
\tau_0$ and $t_n \to t_0$ as $n \to \infty$, then passing to
limits in this inequality, we obtain $\phi(t_0)-\phi(\tau_0)\geq
q$, q.~e.~d.
\end{proof}

Let us fix a number $b^*\in(a,b)$. Now we obtain one remarkable
property of the function $\alpha(t)$ on the interval $(a,b^*]$.

\begin{cl}\label{cl5}
For every $t\in (a,b^*) \cap Sm$ at least one of the
following two assertions holds:

1) $t+\alpha(t)\geq b^*$;

2) there is $\xi=\xi(t)\in (t,t+\alpha(t))\cap Sm$ such that
$\alpha(t)> q\cdot \alpha(\xi)$.
\end{cl}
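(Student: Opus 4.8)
The plan is to argue by contradiction on assertion 2), using the integral inequality from Claim~\ref{cl3} together with Claim~\ref{cl4}. Fix $t\in(a,b^*)\cap Sm$ and suppose assertion 1) fails, so $t+\alpha(t)<b^*<b$. Then Claim~\ref{cl4} applies and gives $\phi(t+\alpha(t))-\phi(t)\geq q$, because the alternative $t+\alpha(t)\geq b$ is excluded. The idea is that this jump of size at least $q$ in $\phi$ across an interval of length $\alpha(t)$ forces the derivative $\phi'$ to be large somewhere in $(t,t+\alpha(t))$, and largeness of $\phi'(\xi)=\cot(\alpha(\xi))$ translates into smallness of $\alpha(\xi)$; quantitatively, we want the average slope to exceed $\cot(\alpha(t))$ by enough.

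More precisely, first I would apply Claim~\ref{cl3} on $[c,d]=[t,t+\alpha(t)]$ (which lies in $(a,b)$) to get
$$
q\leq \phi(t+\alpha(t))-\phi(t)\leq \int_t^{t+\alpha(t)} \phi'(\tau)\,d\tau.
$$
Hence the essential supremum of $\phi'$ on $(t,t+\alpha(t))$ is at least $q/\alpha(t)$; since $\alpha(t)<\pi$ and $q>1$, pick $\xi\in(t,t+\alpha(t))\cap Sm$ with $\phi'(\xi)$ as close to this (essential) supremum as we like, in particular with $\phi'(\xi)\geq q/\alpha(t)-\varepsilon$ for a small $\varepsilon>0$ to be chosen. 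Then $\cot(\alpha(\xi))=\phi'(\xi)\geq q/\alpha(t)-\varepsilon$, so $\alpha(\xi)\leq\operatorname{arccot}(q/\alpha(t)-\varepsilon)$. It remains to check that this bound is $<\alpha(t)/q$, i.e. that
$$
q\cdot\operatorname{arccot}\!\left(\frac{q}{\alpha(t)}-\varepsilon\right)<\alpha(t).
$$
For $\varepsilon=0$ this reads $q\operatorname{arccot}(q/\alpha(t))<\alpha(t)$, equivalently $s\operatorname{arccot}(s)<1$ with $s=q/\alpha(t)>q/\pi$; and the elementary inequality $s\operatorname{arccot}(s)<1$ holds for all $s>0$ (as noted after Example~\ref{ex1}). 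By continuity in $\varepsilon$ the strict inequality survives for $\varepsilon$ small enough, giving $\alpha(t)>q\cdot\alpha(\xi)$, which is assertion 2).

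The main obstacle is the measure-theoretic step: Claim~\ref{cl3} only gives $\phi(d)-\phi(c)\leq\int_c^d\phi'$, and $\phi'$ is merely summable, so I cannot pick a point where $\phi'$ equals a pointwise maximum—only a set of positive measure where $\phi'\geq q/\alpha(t)-\varepsilon$, from which a genuine point $\xi\in Sm$ must be extracted (this is fine, since $Sm$ has full measure by Claim~\ref{cl3}, so almost every such $\xi$ lies in $Sm$). A secondary point to handle carefully is monotonicity of $\operatorname{arccot}$ and the strictness in the inequality $q\operatorname{arccot}(q/\alpha(t))<\alpha(t)$, but this is a routine one-variable estimate once one records that $u\mapsto u\operatorname{arccot}(u)$ is bounded above by $1$ and increases to $1$ as $u\to\infty$.
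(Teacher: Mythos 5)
Your proof is correct and takes essentially the same route as the paper: Claim~\ref{cl4} yields $\phi(t+\alpha(t))-\phi(t)\geq q$, Claim~\ref{cl3} gives the integral bound, a point $\xi\in Sm$ with roughly above-average $\phi'$ is extracted from the full-measure set, and the elementary inequality $\theta<\tan\theta$ (equivalently $s\arccot(s)<1$) turns $\phi'(\xi)$ large into $\alpha(t)>q\,\alpha(\xi)$. The only difference is cosmetic: the paper obtains $\xi$ with $\phi'(\xi)$ at least the exact average $q/\alpha(t)$ (by integrating the contrary inequality over $(t,t+\alpha(t))\cap Sm$), which avoids your $\varepsilon$-loss and continuity step, but your version closes that gap correctly since $\varepsilon$ is chosen depending only on $t$.
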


\begin{proof}
Suppose that Assertion 1) does not hold, i.~e. $t+\alpha(t) <
b^*$. Set $s=t+\alpha(t)$, then $t<s<b^*$. According to Claim
\ref{cl4}, $\phi(s)-\phi(t)\geq q
>1$. According to Claim \ref{cl3}, the function $\phi$ is differentiable
almost everywhere on the interval $[t,s]$, the derivative
$\phi^{\prime}$ is summable on this interval, and the inequality
$$
1<q \leq \phi(s)-\phi(t) \leq \int\limits_t^s \phi^{\prime} (\tau)
d\tau
$$
holds.

Further, for some number $\xi \in (t,s)\cap Sm$ the
inequality $\int_t^s \phi^{\prime} (\tau) d\tau \leq
(s-t)\phi^{\prime} (\xi)$ holds. Indeed, the set $(t,s)\cap Sm$
is a set of full measure on the interval $(t,s)$. If for all points
$\xi$ of this set we have $\int_t^s \phi^{\prime} (\tau) d\tau
> (s-t)\phi^{\prime} (\xi)$, then we get a contradiction
by integrating this inequality with respect to $\xi$
on $(t,s)$. Therefore, the required point $\xi \in
(t,s)\cap Sm$ does exist (such points consist of a set with
positive measure), hence,
$$
1<q \leq \phi(s)-\phi(t) \leq \int\limits_t^s \phi^{\prime} (\tau) d\tau \leq
(s-t)\phi^{\prime} (\xi).
$$
It is clear that  $\phi^{\prime}(\xi)>0$, therefore,
$\alpha(\xi)=\arccot(\phi^{\prime}(\xi))\in (0,\pi/2)$. Further,
$$
\phi^{\prime}(\xi)= \cot(\alpha(\xi))=1/\tan(\alpha(\xi))<1/\alpha(\xi),
$$
because $\tan(x)>x$ for $x\in(0,\pi/2)$. Consequently, $q \leq
\phi^{\prime}(\xi) \cdot \alpha(t)<
\frac{\alpha(t)}{\alpha(\xi)}$, and Assertion 2) is proved.
\end{proof}

Now, consider the set
$$
S^*=Sm \cap(a,b^*].
$$
It has full measure on the interval
$(a,b^*]$. Later on we will need some properties of the function
$t\mapsto \alpha(t)$ on the set $S^*$.

\begin{cl}\label{cl6} At least one of the following assertions holds:

1) there are a point $t^* \in (a,b^*]$ and a sequence $\{t_n\}$,
$t_n \in S^*$, such that $\alpha(t_n) \to 0$ and $t_n \to t^*$ as
$n \to \infty$;

2) $a>-\infty$ and there is $c>0$ such that $\alpha(t) \geq c$ for
all $t\in S^*$.
\end{cl}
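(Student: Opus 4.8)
The plan is to prove the dichotomy by contradiction: assume that the first assertion of Claim \ref{cl6} fails, and derive the second one. The core of the argument is a pointwise lower estimate for $\alpha$ on $S^*$, obtained by iterating Claim \ref{cl5}. Concretely, I would first show that, if the first assertion of Claim \ref{cl6} is false, then
$$
\alpha(t)\ \geq\ \frac{q-1}{q}\,(b^*-t)\qquad\text{for all }t\in S^*.
$$
To do this I argue by contradiction again: if some $t_0\in S^*$ has $\delta:=\alpha(t_0)$ with $t_0+\delta\cdot\frac{q}{q-1}<b^*$, build inductively a sequence $\xi_0=t_0,\xi_1,\xi_2,\dots$ with $\xi_n\in Sm\cap(a,b^*)$, $\alpha(\xi_n)\leq\delta q^{-n}$, and $\xi_n\leq t_0+\delta\sum_{j=0}^{n-1}q^{-j}$. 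At each step apply Claim \ref{cl5} to $\xi_n$: its first alternative $\xi_n+\alpha(\xi_n)\geq b^*$ is ruled out because $\xi_n+\alpha(\xi_n)\leq t_0+\delta\sum_{j=0}^{n}q^{-j}<t_0+\delta\frac{q}{q-1}<b^*$, so the second alternative gives $\xi_{n+1}\in(\xi_n,\xi_n+\alpha(\xi_n))\cap Sm$ with $\alpha(\xi_{n+1})<\alpha(\xi_n)/q\leq\delta q^{-(n+1)}$ and $\xi_{n+1}<t_0+\delta\sum_{j=0}^{n}q^{-j}<b^*$; this keeps $\xi_{n+1}\in S^*$ and lets the construction run forever.

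Then $\{\xi_n\}$ is strictly increasing and bounded above by $t_0+\delta\frac{q}{q-1}<b^*$, so it converges to some $\xi^*\in(t_0,b^*)\subset(a,b^*]$, while $\alpha(\xi_n)\leq\delta q^{-n}\to0$; taking $t^*=\xi^*$ contradicts the assumed failure of the first assertion of Claim \ref{cl6}, which establishes the displayed bound. From it, since $\alpha$ is $(0,\pi)$-valued, I get $b^*-t<\frac{q\pi}{q-1}$ for every $t\in S^*$, and since $S^*$ has full measure in $(a,b^*]$ (hence accumulates at $a$) this yields $a\geq b^*-\frac{q\pi}{q-1}>-\infty$ — the first half of the second assertion.

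Finally I would upgrade the linear bound (which degenerates near $b^*$) to a uniform one. Applying the negation of the first assertion of Claim \ref{cl6} at the single value $t^*=b^*$: if there were no $\varepsilon>0$ and $c_1\in(a,b^*)$ with $\alpha\geq\varepsilon$ on $S^*\cap[c_1,b^*]$, then choosing $\varepsilon=1/n$ and $c_1=b^*-1/n$ would produce $t_n\in S^*$ with $t_n\to b^*$ and $\alpha(t_n)\to0$, which is forbidden. So such $\varepsilon,c_1$ exist, and together with $\alpha(t)\geq\frac{q-1}{q}(b^*-c_1)$ on $S^*\cap(a,c_1)$ this gives $\alpha(t)\geq c:=\min\{\varepsilon,\frac{q-1}{q}(b^*-c_1)\}>0$ on all of $S^*$, i.e. the second assertion. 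The main obstacle is the iteration: one must set up the geometric bookkeeping so that the factor $q>1$ in Claim \ref{cl5} yields $\sum_{j\geq0}q^{-j}=\frac{q}{q-1}<\infty$ and keeps every $\xi_n$ (and the limit $\xi^*$) strictly below $b^*$ — this is precisely what prevents the first alternative of Claim \ref{cl5} from ever being invoked and makes the construction non-terminating.
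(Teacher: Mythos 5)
Your argument is correct: every application of Claim \ref{cl5} respects its hypotheses (the running bound $\xi_n+\alpha(\xi_n)\le t_0+\delta\sum_{j=0}^{n}q^{-j}<t_0+\delta\tfrac{q}{q-1}<b^*$ keeps each $\xi_n$ in $(a,b^*)\cap Sm$ and blocks the first alternative), the limit $\xi^*\in(a,b^*]$ together with $\alpha(\xi_n)\le\delta q^{-n}\to 0$ genuinely contradicts the negation of Assertion 1), and the patching near $b^*$ is sound. The underlying mechanism is the same as in the paper --- iterate Claim \ref{cl5} to manufacture an increasing sequence on which $\alpha$ decays geometrically, then invoke the failure of Assertion 1) and the bound $\alpha<\pi$ --- but your organization is genuinely different and worth comparing. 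The paper first extracts a positive infimum $c_1$ of $\alpha$ on $S^*\cap[b_0,b^*]$ by compactness, sets $c=\min\{c_1,b^*-b_0\}$ and proves $\alpha\ge c$ on all of $S^*$ by the iteration; it then needs a separate backward induction ($b_i=b_{i-1}-q^i c$, yielding $\alpha\ge cq^i$ on $(-\infty,b_i]\cap Sm$ and eventually exceeding $\pi$) to exclude $a=-\infty$. You instead establish the quantitative pointwise estimate $\alpha(t)\ge\frac{q-1}{q}(b^*-t)$ on $S^*$, which disposes of $a=-\infty$ in one line (since $\alpha<\pi$ forces $b^*-t<\frac{q\pi}{q-1}$, while $S^*$ has full measure in $(a,b^*]$), and you recover the uniform constant by applying the negated Assertion 1) directly at $t^*=b^*$; this replaces both the paper's second induction and its compactness step by a single linear bound, at the cost of the more careful geometric bookkeeping in your iteration. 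One cosmetic point: when you take $c_1=b^*-1/n$ you should restrict to $n$ large enough that $c_1>a$ (or replace $c_1$ by $\max\{b^*-1/n,\tfrac{1}{2}(a+b^*)\}$), which is harmless.
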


\begin{proof}
Suppose that Assertion 1) does not hold and prove Assertion 2).

Consider any $b_0\in(a,b)$ and let $c_1\geq 0$ be the greatest
lower bound of the function $t\mapsto \alpha(t)$ on the set $S^*
\cap [b_0,b]$. If $c_1=0$, then using the compactness of the
interval $[b_0,b]$, it is easy to find a sequence $\{t_n\}$, $t_n
\in S^* \cap [b_0,b)\subset (a,b)$, that tends to some $t^* \in
S^* \cap [b_0,b]$ and such that $\alpha(t_n) \to 0$ as $n \to
\infty$. But Assertion 1) does not hold and, consequently, we get
the inequality $c_1>0$.

Now, set $c=\min \{c_1, b^*-b_0\}>0$. Let us show that $\alpha(t)
\geq c$ for all $t\in S^*$. Suppose that the set
$$
S=\{ t\in S^*\,|\, \alpha(t) <c\}
$$
is non-empty. Obviously, $S\subset (a,b_0]$. Note that for all
$t\in S$ the inequality $t+\alpha(t) < b^*$ holds (otherwise
$\alpha(t) \geq b^*-t\geq b^*-b_0\geq c$), hence, by Claim
\ref{cl5} there exists $\xi=\xi(t)\in (t,t+\alpha(t))\cap Sm$ such
that $\alpha(\xi)<\alpha(t)/q <\alpha(t)<c$, in particular, $\xi
\in S$.

Now, choose some $t_1\in S$ and construct a sequence of points
$\{t_n\}$ from $S$ by the following method: if $t_i$ is defined,
then put $t_{i+1}=\xi(t_i)$. By construction $t_i<t_{i+1}$, and,
since $c>\alpha(t_i)\geq q\cdot
\alpha(\xi(t_i))=q\cdot\alpha(t_{i+1})> \alpha(t_{i+1})$, then
$t_{i+1} \in S$. Since the constructed sequence increases and is
bounded from above by the number $b_0$ ($S\subset (a,b_0]$), it
has a finite limit $t^* \in (a,b_0]$, and the inequality
$\alpha(t_i)\geq q\cdot\alpha(t_{i+1})$ implies $\alpha(t_n) \to
0$ as $n \to \infty$. Therefore, Assertion 1) holds that is
impossible by our assumptions. Therefore, $S=\emptyset$, i.~e.
$\alpha(t) \geq c$ for all $t\in (a,b]$.

If $a>-\infty$, then we get Assertion 2) from statement of the
claim. Hence, we consider now the case $a=-\infty$.

For all $i\geq 1$ define the numbers $b_i$ by the recurrent
formula $b_i=b_{i-1}-q^i\cdot c$ ($b_0$ has been chosen earlier).
Let us prove by induction that
$$
\alpha(t)\geq c \cdot q^i
$$
for all $t\in (-\infty,b_i]\cap Sm $. We have proved this
inequality for $i=0$. Assume that it is proved for all $i<k$ and
prove it for $i=k$.

Consider any $t\in (-\infty, b_k]\cap Sm $. If $t+\alpha(t)\geq
b_{k-1}$, then $\alpha(t)
>b_{k-1}-t\geq b_{k-1}-b_k=c \cdot q^k$. If $t+\alpha(t) \leq b_{k-1}$
(that contradicts to the inequality $t+\alpha(t)
>b>b_{k-1}$), then by Claim \ref{cl5} there is
$\xi=\xi(t)\in (t,t+\alpha(t))\cap Sm \subset (t, b_{k-1})$ such
that $\alpha(t)\geq q\cdot \alpha(\xi)$. Since $\xi <b_{k-1}$,
then $\alpha(\xi)\geq c \cdot q^{k-1}$ by the inductive
assumption. Therefore, $\alpha(t)\geq c \cdot q^k$ in this case
too.

Now, it suffices to note that the just proved inequality
$\alpha(t)\geq c \cdot q^i$ contradicts to the inequality
$\alpha(t)=\arccot(\phi^{\prime}(t))<\pi$. Actually, for rather
large $i$ the inequality $c\cdot q^i
>\pi$ holds. This contradiction completes the proof of the claim.
\end{proof}
\smallskip

Now, we are ready to finish the proof of Theorem \ref{main0}. As
we have proved, either Assertion 1), or Assertion 2) from
the statement of Claim \ref{cl6} holds, therefore, it suffices to get a
contradiction in both these cases.
\smallskip

Suppose that Assertion 1) holds, i.~e. there are a point $t^* \in
(a,b^*]$ and a sequence $\{t_n\}$, $t_n \in S^* \subset Sm$ such
that $\alpha(t_n) \to 0$ and $t_n \to t^*$ as $n \to \infty$. Put
$s_n=t_n+\alpha(t_n)$. By Claim \ref{cl4} for rather large $n$ the
inequality $\phi(s_n)-\phi(t_n)\geq q$ holds (since $s_n\to t^*
<b$ as $n \to \infty$). But it is impossible, since $t_n \to t^*$,
$s_n= t_n+\alpha(t_n)\to t^*$, and the function $t \mapsto
\phi(t)$ is continuous at the point $t^*$. Therefore, we have
proved the theorem in this case.
\smallskip

Now, suppose that Assertion 2) holds, i.~e. $a>-\infty$ and there
is $c>0$ such that $\alpha(t) \geq c$ for all $t\in S^*$. Since
$\alpha(t)=\arccot(\phi^{\prime}(t))$, we get the inequality
$\phi^{\prime}(t) \leq \cot(c)\in \mathbb{R}$ для всех $t\in S^*$.
According to Claim \ref{cl3} for every $\eta\in (a,b^*)$ the
derivative $\phi^{\prime}$ is summable on the interval $[\eta,b^*]
\subset (a,b)$ and satisfies the inequality
$$ \phi(b^*)-\phi(\eta)
\leq \int\limits_{\eta}^{b^*} \phi^{\prime} (\tau) d\tau \leq
\cot(c)(b^*-\eta).
$$
Tending $\eta$  to $a$, we get $\phi(b^*)-\phi(a) \leq
\cot(c)(b^*-a)\in \mathbb{R}$, but the latter is impossible
because of $\phi(a)=-\infty$. Consequently, we have proved the
theorem in this case too.

\section{Various consequences and connections with other results}

At first we will use the assertion of Theorem \ref{main0} for
parameterizations of some special type. Let us consider two
continuous functions $h,g:[a,b)\subset \mathbb{R} \rightarrow
\mathbb{R}$ and suppose that the function $h$ is increasing and is
not a constant in any neighborhood of the point $a$. For any $x\in
(a,b)$ we consider the set of numbers $\tau \in (a, x]$, that are
points of local extremum of the function
\begin{equation}\label{eq3.5}
t \mapsto \Bigl(g(x)-g(a)\Bigr)h(t)-\Bigl(h(x)-h(a)\Bigr)g(t).
\end{equation}
Let $\mu(x)$ be the supremum of such $\tau$. The following theorem
gives a non-trivial information on a behavior of $\mu(x)$ as $x
\to a$.

\begin{thm}\label{main3.5}
Suppose in addition that there exists a finite limit
$\lim\limits_{x\to a} \frac{g(x)-g(a)}{h(x)-h(a)}$, then the
following inequality holds:
\begin{equation}\label{coshy.1}
\varlimsup_{x\to a} \frac{h(\mu(x))-h(a)}{h(x)-h(a)} \geq
\frac{1}{e}\,.
\end{equation}
\end{thm}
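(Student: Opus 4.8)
The plan is to deduce Theorem~\ref{main3.5} from Theorem~\ref{main0} applied to the planar curve $\gamma(t)=(h(t),g(t))\in\mathbb{E}^2$, $t\in[a,b)$, and then to translate the conclusion of Theorem~\ref{main0}, which speaks of Euclidean distances, into the asserted inequality for increments of $h$. The finiteness of the limit $\ell:=\lim\limits_{x\to a}\frac{g(x)-g(a)}{h(x)-h(a)}$ is exactly what makes this translation legitimate, and its use is the heart of the argument.

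First I would check the hypotheses. The curve $\gamma$ is continuous, and it is not constant in any neighborhood of $a$ since $h$ is not; thus Theorem~\ref{main0} applies to $\gamma$. Writing $\widetilde h(t)=h(t)-h(a)$ and $\widetilde g(t)=g(t)-g(a)$, the function $\widetilde h$ is continuous, non-decreasing, $\widetilde h(a)=0$, and $\widetilde h(t)>0$ for every $t\in(a,b)$ (otherwise $h$ would be constant on an interval $[a,t_0]$, contradicting the hypothesis). Consequently $\gamma(t)\ne\gamma(a)$ and $D(t)=\|\gamma(t)-\gamma(a)\|=\sqrt{\widetilde h(t)^{2}+\widetilde g(t)^{2}}\ge\widetilde h(t)>0$ for all $t\in(a,b)$. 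By the determinant characterization of support points recalled in Section~1, a number $\tau\in(a,x]$ is a point of local extremum of the function in (\ref{eq3.5}) if and only if it is a point of local extremum of the function $\Phi$ attached to the chord $[\gamma(a),\gamma(x)]$ (the two functions differ only by sign), equivalently, if and only if $\gamma(\tau)$ is a support point of that chord. Hence the set of $\tau$ occurring in the definition of $\mu(x)$ is precisely $S(x)$ for the curve $\gamma$ (and it is non-empty, again by Section~1), so $\mu(x)=\sup S(x)\in(a,x]$. Since $\widetilde h$ is continuous and non-decreasing, $h(\mu(x))-h(a)=\widetilde h(\mu(x))=\sup_{\tau\in S(x)}\widetilde h(\tau)$, while $h(x)-h(a)=\widetilde h(x)$.

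The decisive step is the comparison of $DS(x)/D(x)$ with $\bigl(h(\mu(x))-h(a)\bigr)/\bigl(h(x)-h(a)\bigr)$. From $\widetilde g(t)/\widetilde h(t)\to\ell$ we get $D(t)/\widetilde h(t)=\sqrt{1+(\widetilde g(t)/\widetilde h(t))^{2}}\to c_{0}:=\sqrt{1+\ell^{2}}>0$ as $t\to a^{+}$. Fix $\varepsilon\in(0,1)$ and choose $\delta>0$ with $(1-\varepsilon)c_{0}<D(t)/\widetilde h(t)<(1+\varepsilon)c_{0}$ for all $t\in(a,a+\delta)$. Then for $x\in(a,a+\delta)$ and any $\tau\in S(x)\subset(a,x]\subset(a,a+\delta)$,
\[
\frac{D(\tau)}{D(x)}=\frac{D(\tau)/\widetilde h(\tau)}{D(x)/\widetilde h(x)}\cdot\frac{\widetilde h(\tau)}{\widetilde h(x)}\le\frac{1+\varepsilon}{1-\varepsilon}\cdot\frac{\widetilde h(\tau)}{\widetilde h(x)}\le\frac{1+\varepsilon}{1-\varepsilon}\cdot\frac{\widetilde h(\mu(x))}{\widetilde h(x)}.
\]
Taking the supremum over $\tau\in S(x)$ and rearranging,
\[
\frac{h(\mu(x))-h(a)}{h(x)-h(a)}\ \ge\ \frac{1-\varepsilon}{1+\varepsilon}\cdot\frac{DS(x)}{D(x)}\qquad\text{for all }x\in(a,a+\delta).
\]
Passing to $\varlimsup_{x\to a}$ and using Theorem~\ref{main0} for $\gamma$ gives $\varlimsup\limits_{x\to a}\frac{h(\mu(x))-h(a)}{h(x)-h(a)}\ge\frac{1-\varepsilon}{1+\varepsilon}\cdot\frac1e$, and letting $\varepsilon\to0$ yields (\ref{coshy.1}).

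The only subtle point is this last comparison, and it is where the hypothesis on $\ell$ is indispensable: it forces $D(t)$ and $\widetilde h(t)$ to remain comparable, with ratio tending to the positive constant $c_{0}$ uniformly as $t\to a^{+}$, so that the two $\varlimsup$'s involved coincide. Without a finite $\ell$ this mechanism breaks, and one should not expect the statement to survive. All remaining ingredients — applicability of Theorem~\ref{main0}, non-emptiness of $S(x)$, and the identity $\widetilde h(\mu(x))=\sup_{\tau\in S(x)}\widetilde h(\tau)$ — are routine.
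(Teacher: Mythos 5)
Your proof is correct and follows essentially the same route as the paper: apply Theorem~\ref{main0} to the curve built from $(g,h)$, identify the extremum points of (\ref{eq3.5}) with $S(x)$, and use the finite limit of $\frac{g(x)-g(a)}{h(x)-h(a)}$ to translate the Euclidean ratio $DS(x)/D(x)$ into the $h$-increment ratio. Your explicit $\varepsilon$-comparison of $D(t)$ with $h(t)-h(a)$ merely spells out, a bit more carefully, the asymptotic equivalences (e.g.\ $DS(x)\sim D(\mu(x))$ and (\ref{odno.1})) that the paper dispatches with ``it is easy to see''.
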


\begin{proof}
We use the assertion of Theorem \ref{main0} for the curve
$\gamma(t)=(g(t),h(t))\in \mathbb{R}^2$. For this curve, it is
clear that $\tau \in S(x)$ if and only if $\tau$ is a point of
extremum of the function (\ref{eq3.5}),
$D(x)=\sqrt{\bigl(g(x)-g(a)\bigr)^2+\bigl(h(x)-h(a)\bigr)^2}$.
Since the limit $P:=\lim\limits_{x\to a}
\frac{g(x)-g(a)}{h(x)-h(a)}$ exists and is finite, then it is easy
to see that
$$
DS(x)= \sup \{ D(\tau)\,|\, \tau \in S(x)\} \sim D(\mu(x))=
$$
$$
\sqrt{\bigl(g(\mu(x))-g(a)\bigr)^2+\bigl(h(\mu(x))-h(a)\bigr)^2}\,
\mbox{ as }\, x\to a.
$$
Set $L(x)=\frac{h(\mu(x))-h(a)}{h(x)-h(a)}$, then taking into
account the above asymptotic equality, we get
\begin{equation}\label{odno.1}
\left(\frac{DS(x)}{D(x)L(x)}\right)^2 \sim \frac{1+\left(
\frac{g(\mu(x))-g(a)}{h(\mu(x))-h(a)}\right)^2}{1+\left(
\frac{g(x)-g(a)}{h(x)-h(a)}\right)^2} \to \frac{1+P^2}{1+P^2}=1
\end{equation}
as $x\to a$. There, by Theorem  \ref{main0}
$$
\varlimsup_{x\to a}
\frac{h(\mu(x))-h(a)}{h(x)-h(a)}=\varlimsup_{x\to a}
\frac{DS(x)}{D(x)} \geq \frac{1}{e}\,,
$$
q. e. d.
\end{proof}

Now, suppose in addition that the function $h,g:[a,b)\subset \mathbb{R}
\rightarrow \mathbb{R}$ have derivatives and $h^{\prime}(t)>0$
on the interval $(a,b)$. Then by Cauchy's mean value theorem,
for any $x\in (a,b)$ there is $\tau \in (a, x)$ with the property:
\begin{equation}\label{eq4}
\frac{g(x)-g(a)}{h(x)-h(a)}=\frac{g^{\prime}(\tau)}{h^{\prime} (\tau)}.
\end{equation}
Let $\xi(x)$ be the supremum of such $\tau$. Obviously, any point
$\tau$ of extremum of the function (\ref{eq3.5}) satisfies the
equality (\ref{eq4}). Hence, $\xi(x) \geq \mu(x)$ for all $x$, and
Theorem \ref{main3.5} implies a non-trivial information on a
behavior of $\xi(x)$ as $x \to a$.

\begin{thm}\label{main4}
Suppose in addition that there exists a finite limit
$\lim\limits_{x\to a} \frac{g(x)-g(a)}{h(x)-h(a)}$, then the
following inequality holds:
\begin{equation}\label{coshy}
\varlimsup_{x\to a} \frac{h(\xi(x))-h(a)}{h(x)-h(a)} \geq \frac{1}{e}\,.
\end{equation}
\end{thm}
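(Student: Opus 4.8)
The plan is to deduce Theorem \ref{main4} directly from Theorem \ref{main3.5}; the added hypothesis (existence of the finite limit $\lim_{x\to a}\frac{g(x)-g(a)}{h(x)-h(a)}$) is exactly the one required there, and under the present differentiability assumptions the standing hypotheses on $h,g$ already subsume those of Theorem \ref{main3.5}. The only genuinely new step is a monotone comparison between the ``last'' Cauchy point $\xi(x)$ and the ``last'' extremum point $\mu(x)$.

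First I would note that $h(x)-h(a)>0$ for every $x\in(a,b)$: if $h(x_0)=h(a)$ for some $x_0\in(a,b)$, then monotonicity forces $h$ to be constant on $[a,x_0]$, contradicting the assumption that $h$ is not constant in any neighborhood of $a$. In particular all ratios below have positive denominators and Cauchy's mean value theorem is applicable on each $[a,x]$. Next I would invoke the elementary remark already made just before the statement: every point $\tau\in(a,x)$ of local extremum of the function (\ref{eq3.5}) satisfies (\ref{eq4}), since differentiating (\ref{eq3.5}) at such a $\tau$ gives $\bigl(g(x)-g(a)\bigr)h^{\prime}(\tau)-\bigl(h(x)-h(a)\bigr)g^{\prime}(\tau)=0$, and dividing by $h^{\prime}(\tau)\bigl(h(x)-h(a)\bigr)>0$ yields (\ref{eq4}). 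Hence the set of admissible $\tau$ defining $\mu(x)$ is contained in the set defining $\xi(x)$, so $\mu(x)\le\xi(x)$ for all $x\in(a,b)$.

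Since $h$ is increasing, from $\mu(x)\le\xi(x)$ we get $h(\xi(x))-h(a)\ge h(\mu(x))-h(a)\ge 0$, and dividing by $h(x)-h(a)>0$ gives, for all $x$ near $a$,
$$
\frac{h(\xi(x))-h(a)}{h(x)-h(a)}\ \ge\ \frac{h(\mu(x))-h(a)}{h(x)-h(a)}\,.
$$
Taking $\varlimsup_{x\to a}$ of both sides and applying inequality (\ref{coshy.1}) of Theorem \ref{main3.5} to the right-hand side yields (\ref{coshy}), which completes the argument.

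I do not expect a real obstacle here: all the analytic difficulty is already packed into Theorem \ref{main3.5}, hence ultimately into Theorem \ref{main0}. The only points needing a little care are the strict positivity of $h(x)-h(a)$ near $a$ (used both to apply Cauchy's theorem and to preserve the inequality under division) and the verification that local extremum points of (\ref{eq3.5}) lie among the Cauchy points of (\ref{eq4}); the latter is precisely the monotone inclusion that makes $\xi(x)\ge\mu(x)$ and thus lets the sharp constant $1/e$ pass unchanged from $\mu$ to $\xi$.
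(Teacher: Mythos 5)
Your argument is correct and is essentially the paper's own: the paper derives Theorem \ref{main4} from Theorem \ref{main3.5} via exactly the observation that every extremum point of (\ref{eq3.5}) satisfies (\ref{eq4}), hence $\xi(x)\geq\mu(x)$, and then uses monotonicity of $h$ to pass the bound from $\mu$ to $\xi$. Your additional remarks on the positivity of $h(x)-h(a)$ are just a more careful spelling-out of the same reduction.
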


\begin{rmk}\label{re1}
Note that for any fixed function $h$ the inequality (\ref{coshy})
(as well as the inequality (\ref{coshy.1})) is best
possible in general. To show this, set
$g(x)=(h(x)-h(a))^{1+\alpha}$, where $\alpha
>0$. Since $h$ is monotone, for all $x\in (a,b)$ there is a unique $\tau=\xi(x) \in (a,x)$
that satisfies the equation (\ref{eq4}). Simple calculations imply
$$
\frac{h(\xi(x))-h(a)}{h(x)-h(a)}=\left(\frac{1}{1+\alpha}\right)^{1/{\alpha}}
$$
for all $x\in (a,b)$. Therefore, $\lim\limits_{x\to a}
\frac{h(\xi(x))-h(a)}{h(x)-h(a)}=\left(\frac{1}{1+\alpha}\right)^{1/{\alpha}}$.
Note also that $\lim\limits_{\alpha\to 0}
\left(\frac{1}{1+\alpha}\right)^{1/{\alpha}}=e^{-1}$. The same
example and the last formula in the proof of Theorem \ref{main3.5}
imply also the unimprovability of the inequality (\ref{eq1}).
\end{rmk}

In the case $h(x)= x$ Theorem \ref{main4} implies an
assertion on the asymptotic of mean value points in Lagrange's
theorem.

\begin{cor}\label{cor1}
Let $g:[a,b) \subset \mathbb{R} \rightarrow \mathbb{R}$ be a
continuous function that is differentiable on the interval
$(a,b)$. For every $x\in (a,b)$ denote by $\xi(x)$ the supremum of
numbers $\tau \in (a,x]$ such that $g^{\prime}(\tau) \cdot (x-a) =
g(x)-g(a)$. If the function $g$ has (right hand) derivative at the
point $a$, then the inequality
\begin{equation}\label{lagr}
\varlimsup_{x\to a} \frac{\xi(x)-a}{x-a} \geq \frac{1}{e}
\end{equation}
holds.
\end{cor}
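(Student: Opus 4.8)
The plan is to obtain Corollary~\ref{cor1} as the special case $h(x)=x$ of Theorem~\ref{main4}. First I would check that the hypotheses of Theorem~\ref{main4} are met: with $h(x)=x$ the function $h:[a,b)\to\mathbb{R}$ is continuous, differentiable on $(a,b)$ with $h^{\prime}(t)=1>0$, and it is certainly not constant near $a$; the function $g$ is continuous on $[a,b)$ and differentiable on $(a,b)$ by assumption. The extra hypothesis that a finite limit $\lim_{x\to a}\frac{g(x)-g(a)}{h(x)-h(a)}$ exists becomes $\lim_{x\to a}\frac{g(x)-g(a)}{x-a}$, which is precisely the assumption that $g$ has a (right-hand) derivative at $a$; this limit equals $g^{\prime}(a)$ and is finite. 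So Theorem~\ref{main4} applies.

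Next I would identify the quantities. Cauchy's mean value theorem for the pair $(g,h)$ with $h(x)=x$ reduces to Lagrange's theorem: the relation~(\ref{eq4}) becomes $\frac{g(x)-g(a)}{x-a}=g^{\prime}(\tau)$, i.e. $g^{\prime}(\tau)\cdot(x-a)=g(x)-g(a)$, which is exactly the defining relation for the set of $\tau$ whose supremum is the $\xi(x)$ of Corollary~\ref{cor1}. Hence the function $\xi(x)$ appearing in Theorem~\ref{main4} coincides with the one in the corollary. Finally, since $h(t)=t$ we have $h(\xi(x))-h(a)=\xi(x)-a$ and $h(x)-h(a)=x-a$, so the conclusion~(\ref{coshy}) of Theorem~\ref{main4},
$$
\varlimsup_{x\to a}\frac{h(\xi(x))-h(a)}{h(x)-h(a)}\geq\frac{1}{e},
$$
is literally the desired inequality~(\ref{lagr}). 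This completes the proof.

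There is essentially no obstacle here: the corollary is a transcription of Theorem~\ref{main4} under the substitution $h=\mathrm{id}$, and the only point requiring a moment's attention is the translation of ``$g$ has a right-hand derivative at $a$'' into the existence of the finite limit demanded by Theorem~\ref{main4}. (If one wished to avoid quoting Theorem~\ref{main4} one could run the argument directly: apply Theorem~\ref{main0} to the planar curve $\gamma(t)=(g(t),t)$, note that $D(t)=\sqrt{(g(t)-g(a))^2+(t-a)^2}\sim\sqrt{1+g^{\prime}(a)^2}\,(t-a)$ as $t\to a$, and that every point of local extremum of $t\mapsto\Phi(t)$ for this curve — hence every element of $S(x)$ — satisfies Lagrange's relation, so that $\xi(x)$ dominates $\sup\{\tau\in S(x)\}$; then the asymptotic cancellation of the factor $\sqrt{1+g^{\prime}(a)^2}$ in numerator and denominator yields~(\ref{lagr}). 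But going through Theorem~\ref{main4} is shorter.)
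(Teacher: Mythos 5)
Your proposal is correct and matches the paper's own route: the paper obtains Corollary~\ref{cor1} exactly as the specialization $h(x)=x$ of Theorem~\ref{main4}, with the right-hand derivative of $g$ at $a$ supplying the finite limit $\lim_{x\to a}\frac{g(x)-g(a)}{h(x)-h(a)}$. Your verification of the hypotheses and the identification of $\xi(x)$ and of the ratio $\frac{h(\xi(x))-h(a)}{h(x)-h(a)}=\frac{\xi(x)-a}{x-a}$ is precisely what the paper leaves implicit, so nothing further is needed.
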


The inequality (\ref{lagr}) becomes an equality, for example, for
the function $g:[0,1)\rightarrow \mathbb{R}$, defined by the
equality $g(x)=-\int\limits_0^x \frac{dt}{\ln t}$. The conjecture
of validity of the above corollary has been stated (as well as some other conjectures)
by Professor V.K.~Ionin. In the case, when
the derivative $g^{\prime}=:f$ is continuous, this conjecture
could be reformulated in the integral form. Consider a continuous
function $f:[a,b] \rightarrow \mathbb{R}$. For any $x\in (a,b]$
there exists $\tau \in [a,x]$ such that
$$
\int\limits_a^xf(t)\,d t=(x-a)f(\tau)
$$
(this is a partial case of the integral mean value theorem). Such
$\tau$ is unique if $f$ is strictly decreases or strictly
increases. In general case we set
$$
\eta(x):=\max \{ \tau \in [0,x]\,|\,\int\limits_a^xf(t)\,d t=xf(\tau)\}.
$$
Then (this is equivalent to Corollary \ref{cor1}) the inequality
$$
\varlimsup_{x\to a} \frac{\eta(x)-a}{x-a} \geq \frac{1}{e}
$$
holds. The latter inequality was proved at first in the paper
\cite{MV1}, one can find various generalisations of this result in
more recent papers \cite{MV3, MV2, MV5, MV6, MV7}.

The Theorems \ref{main3.5} and \ref{main4} give us a non-trivial
information on a behavior of the functions $\mu(x)$ and $\xi(x)$
by estimating the asymptotic of $\frac{h(\mu(x))-h(a)}{h(x)-h(a)}$
and $\frac{h(\xi(x))-h(a)}{h(x)-h(a)}$ respectively.

However, it would be desirable to get analogues assertions for the
values $\frac{\mu(x)-a}{x-a}$ and $\frac{\xi(x)-a}{x-a}$ (in the
case of Lagrange's theorem $h(x)= x$ we have got the required
results, of course). The following results imply some results of
this kind.

\begin{deff}
For a function $f : [a,b) \to \mathbb{R}$ we denote by
$\overline{\lim\limits_{x\to a}} \ess f(x)$ the greatest lower
bound of numbers $t \in \mathbb{R}$ such that $f(x) \leq t$ almost
everywhere on some interval $[a,\delta] \subset [a,b)$ (essential
upper limit). By analogy, $\lim\limits_{\overline{x\to a}} \ess
f(x)$ means the least upper bound of numbers $t \in \mathbb{R}$
such that $f(x) \geq t$ almost everywhere on some interval
$[a,\delta] \subset [a,b)$ (essential lower limit).
\end{deff}

\begin{lem}\label{main5}
Let $h:[a,b) \rightarrow \mathbb{R}$ be an increasing function and
suppose that
$$ C:=\overline{\lim\limits_{x\to a}}\ess
\frac{h(x)-h(a)}{(x-a)h^{\prime}(x)} <\infty \,.
$$
Then for any number $q \in (0,1)$ we get the inequality
$$
\varlimsup_{x\to a} \frac{h(a+q(x-a))-h(a)}{h(x)-h(a)} \leq q^{1/C}\,.
$$
\end{lem}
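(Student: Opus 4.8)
The plan is to convert the statement into a one-dimensional estimate on $\ln h$ via the fundamental inequality for monotone functions (Remark \ref{re2}). First I would normalize: translating and subtracting a constant, assume $a=0$ and $h(a)=0$; one may also assume $h$ is not constant in any neighbourhood of $0$, since otherwise both the ratio in the conclusion and the ratio defining $C$ are of the form $0/0$ (so the hypothesis $C<\infty$ would be vacuous). Being increasing and non-constant near $0$, the function then satisfies $h(x)>0$ for all $x\in(0,b)$, so $\ln h$ is well defined, finite and increasing on $(0,b)$.

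Next, fix $\epsilon>0$. By the definition of the essential upper limit there is $\delta\in(0,b)$ with $\dfrac{h(x)}{x\,h'(x)}\le C+\epsilon$ for almost every $x\in(0,\delta)$. At almost every such $x$ the increasing function $h$ is differentiable with $h(x)>0$; rewriting, this says $(\ln h)'(x)=\dfrac{h'(x)}{h(x)}\ge\dfrac{1}{(C+\epsilon)\,x}$ for a.e.\ $x\in(0,\delta)$. (One should note here that the exceptional set — where $h'$ fails to exist, or where $h'(x)=0$ so that the bound on $\dfrac{h(x)}{xh'(x)}$ would be violated — has measure zero once $h>0$.)

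Then, for any $x\in(0,\delta)$ I would apply the inequality $\int_{\alpha}^{\beta}f'(t)\,dt\le f(\beta)-f(\alpha)$ for increasing $f$ (Remark \ref{re2}) to $f=\ln h$ on $[qx,x]$, which is legitimate because $0<qx<x<\delta$:
$$
\ln\frac{h(x)}{h(qx)}=\ln h(x)-\ln h(qx)\ \ge\ \int_{qx}^{x}(\ln h)'(t)\,dt\ \ge\ \int_{qx}^{x}\frac{dt}{(C+\epsilon)\,t}\ =\ \frac{1}{C+\epsilon}\,\ln\frac{1}{q}\,.
$$
Exponentiating gives $\dfrac{h(a+q(x-a))-h(a)}{h(x)-h(a)}=\dfrac{h(qx)}{h(x)}\le q^{1/(C+\epsilon)}$ for every $x\in(0,\delta)$, whence $\varlimsup\limits_{x\to a}\dfrac{h(a+q(x-a))-h(a)}{h(x)-h(a)}\le q^{1/(C+\epsilon)}$. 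Letting $\epsilon\to0^{+}$ (with the harmless convention $q^{1/0}=0$, forced since $0<q<1$, for the case $C=0$) yields the bound $q^{1/C}$, as required.

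There is no serious obstacle here; the only two points that need care are extracting the pointwise a.e.\ inequality $(\ln h)'(t)\ge\frac{1}{(C+\epsilon)t}$ cleanly from the definition of $C$ (in particular that $h'>0$ a.e.\ near $0$), and using the \emph{one-sided} integral inequality valid for a merely almost-everywhere-differentiable increasing function — absolute continuity of $\ln h$ is neither assumed nor available, and the inequality fortunately points in exactly the direction we need.
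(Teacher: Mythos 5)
Your proof is correct and follows essentially the same route as the paper's: fix $\varepsilon>0$, use the definition of the essential upper limit to get $\frac{h'(t)}{h(t)-h(a)}\geq\frac{1}{(C+\varepsilon)(t-a)}$ almost everywhere near $a$, integrate over $[a+q(x-a),x]$ using the one-sided inequality of Remark \ref{re2} for the increasing function $\ln\bigl(h(\cdot)-h(a)\bigr)$, exponentiate, and let $\varepsilon\to 0$. Your explicit remarks on the degenerate constant case and on $h'>0$ a.e.\ are fine refinements of details the paper leaves implicit, not a different argument.
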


\begin{proof} Let us fix some number $\varepsilon >0$. Decreasing (if necessary) the number
$b$, we may assume that for almost all $t\in (a, b]$ the
inequality
$$
\frac{h(t)-h(a)}{(t-a)h^{\prime}(t)} <C+\varepsilon
$$
holds, or, equivalently:
$$
\frac{h^{\prime}(t)}{h(t)-h(a)}>\frac{1}{C+\varepsilon}\cdot \frac{1}{t-a}
$$
Integrating the latter inequality from $t=x_q:=a+q(x-a)$ to $t=x$,
we get
$$
\frac{1}{C+\varepsilon}\cdot \ln \frac{1}{q}< \int\limits_{x_q}^{x}\frac{h^{\prime}(t)
dt}{h(t)-h(a)}\leq {\left. \ln \Bigl(h(t)-h(a)\Bigr) \right|}_{t=x_q}^{t=x}=\ln
\frac{h(x)-h(a)}{h(x_q)-h(a)}.
$$
Here we  used properties of the increasing function $x \mapsto \ln
\bigl(h(x)-h(a)\bigr)$ (its increment at the interval is not less
than the integral of its derivative on the same interval, cf.
Remark \ref{re2}). After simple transformations we get
$$
\frac{h(x_q)-h(a)}{h(x)-h(a)}\leq q^{\frac{1}{C+\varepsilon}} \quad \mbox{ и }\quad
\varlimsup_{x\to a}\frac{h(x_q)-h(a)}{h(x)-h(a)}\leq q^{\frac{1}{C+\varepsilon}}.
$$
Since $\varepsilon >0$ is arbitrary, then the lemma is proved.
\end{proof}

\begin{thm}\label{main6}
Let $C= \overline{\lim\limits_{x\to a}}\ess
\frac{h(x)-h(a)}{(x-a)h^{\prime}(x)}$ in the assumptions and
notations of Theorem~\ref{main3.5}. Then the inequality
$$
\varlimsup_{x\to a} \frac{\mu(x)-a}{x-a}\geq e^{-C}
$$
holds. If, in addition, the assumptions of Theorem \ref{main4} are
fulfilled, then the inequality
$$
\varlimsup_{x\to a} \frac{\xi(x)-a}{x-a} \geq e^{-C}
$$
holds too.
\end{thm}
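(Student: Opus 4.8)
The plan is to derive Theorem \ref{main6} from the two results already in hand, Theorem \ref{main3.5} (which controls the ratio on the ``$h$-scale'', $\frac{h(\mu(x))-h(a)}{h(x)-h(a)}$) and Lemma \ref{main5} (which converts an estimate on the ``$x$-scale'' into one on the $h$-scale), the bridge between the two being the monotonicity of $h$. First I would dispose of the trivial case $C=\infty$: then $e^{-C}=0$, and since $\mu(x)\in(a,x]$ forces $\frac{\mu(x)-a}{x-a}>0$, the asserted inequality holds automatically. So assume $C<\infty$, and recall that, by the hypothesis ``$h$ is not constant in any neighborhood of $a$'' together with monotonicity, $h(x)-h(a)>0$ for all $x\in(a,b)$, so all the ratios below are well defined.

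The core of the argument is a contraposition. Fix an arbitrary $q\in(0,1)$ and suppose, toward a contradiction, that $\varlimsup_{x\to a}\frac{\mu(x)-a}{x-a}<q$. Then there is $\delta>0$ with $\mu(x)<a+q(x-a)$ for all $x\in(a,a+\delta)$, and since $h$ is increasing we get
$$
\frac{h(\mu(x))-h(a)}{h(x)-h(a)}\le \frac{h(a+q(x-a))-h(a)}{h(x)-h(a)}
$$
for all such $x$. Passing to $\varlimsup$ as $x\to a$ and using that $\varlimsup$ respects eventual pointwise domination, the left-hand side is at least $\frac1e$ by Theorem \ref{main3.5}, while the right-hand side is at most $q^{1/C}$ by Lemma \ref{main5} (here one reads $q^{1/C}=0$ when $C=0$, which already contradicts $\frac1e\le 0$). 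Hence $q^{1/C}\ge \frac1e$, i.e.\ $q\ge e^{-C}$. Taking the contrapositive: every $q\in(0,1)$ with $q<e^{-C}$ satisfies $\varlimsup_{x\to a}\frac{\mu(x)-a}{x-a}\ge q$; letting $q\uparrow e^{-C}$ (legitimate since $e^{-C}>0$ as $C<\infty$) gives $\varlimsup_{x\to a}\frac{\mu(x)-a}{x-a}\ge e^{-C}$, which is the first assertion.

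For the second assertion I would invoke the inequality $\xi(x)\ge\mu(x)$ noted just before Theorem \ref{main4}: it gives $\frac{\xi(x)-a}{x-a}\ge\frac{\mu(x)-a}{x-a}$ for every $x$, so the bound for $\xi$ follows at once from the bound for $\mu$. (Alternatively, the displayed argument applies verbatim with $\xi$ in place of $\mu$ if one cites Theorem \ref{main4} instead of Theorem \ref{main3.5}.) I do not expect a genuine obstacle here: once Lemma \ref{main5} is available, everything reduces to the monotonicity substitution and to bookkeeping with $\varlimsup$; the only points requiring a little care are the degenerate values $C=\infty$ and $C=0$, both handled above, and making sure the inequality chain is combined with Theorem \ref{main3.5} on the left and Lemma \ref{main5} on the right in the correct direction.
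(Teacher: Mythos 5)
Your proposal is correct and follows essentially the same route as the paper: assume $C<\infty$, use monotonicity of $h$ to pass from $\mu(x)<a+q(x-a)$ to the $h$-ratios, bound below by $\tfrac1e$ via Theorem \ref{main3.5} and above by $q^{1/C}$ via Lemma \ref{main5}, and conclude $q\ge e^{-C}$; the second inequality follows from $\xi(x)\ge\mu(x)$ exactly as in the paper. The only difference is cosmetic (you run the contradiction for every $q<e^{-C}$ and let $q\uparrow e^{-C}$, while the paper fixes one $q$ strictly between $\varlimsup_{x\to a}\frac{\mu(x)-a}{x-a}$ and $e^{-C}$), and your explicit treatment of the degenerate cases $C=\infty$ and $C=0$ is a harmless refinement.
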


\begin{proof}
Let us prove the first inequality. For $C= \infty$ all is clear.
Further consider the case $C<\infty$. Suppose that the theorem is
false. Choose some number $q$ between $\varlimsup\limits_{x\to a}
\frac{\mu(x)-a}{x-a}$ and $e^{-C}$ ($0<q<e^{-C}<1$, in
particular). Without loss of generality we may assume that the inequality
$$
\frac{\mu(x)-a}{x-a} <q
$$
(or, equivalently, the inequality $\mu(x)<x_q=a+q(x-a)$) holds for all
$x\in (a, b)$.
Since the function $h$ increases, then $h(\mu(x)) <h(a+q(x-a))$.
By Theorem \ref{main3.5} we get
$$
\varlimsup_{x\to a} \frac{h(a+q(x-a)))-h(a)}{h(x)-h(a)} \geq
\varlimsup_{x\to a} \frac{h(\mu(x))-h(a)}{h(x)-h(a)} \geq
\frac{1}{e}\,.
$$
Now, Lemma \ref{main5} implies $ e^{-1} \leq q^{1/C}$, i.~e.
$e^{-C}\leq q$, that contradicts to the choice of the number $q$.
This contradiction proves the first inequality of the theorem.

The second inequality obviously follows from the first one and the
fact that $\xi(x) \geq \mu(x)$ for all $x$ in the conditions of
Theorem \ref{main4}.
\end{proof}
\smallskip

Let $f:[0,1] \rightarrow \mathbb{R}$ be a continuous function, and
let $\varphi :[0,1] \rightarrow \mathbb{R}$ be a summable and
non-negative. Let us define the function $\eta : [0,1] \to
\mathbb{R}$ in the following way: for any $x \in (0,1]$, $\eta
(x)$ is the maximum of numbers $\tau \in (0,x]$ satisfied the
equation
$$
\int\limits^x_0 \varphi (t)f(t)dt=f(\tau )\int\limits^x_0 \varphi (t)dt
$$
(such numbers $\tau \in (0,x]$ do exist because of the integral
mean value theorem). Theorem~\ref{main6} implies

\begin{thm}[\cite{MV2,MV4}]\label{main7} Let
$C=\overline{\lim\limits_{x\to 0}} \ess    \Bigl( \int\limits^x_0
\varphi(t)dt \cdot \Bigl( x \varphi (x)\Bigr)^{-1}\Bigr)$, then in
the notations as above the inequality
\begin{equation}\label{intm}
\varlimsup_{x\to 0} \frac{\eta(x)}{x} \geq e^{-C}
\end{equation}
holds.
\end{thm}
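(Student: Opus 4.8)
The plan is to deduce the statement from Theorem~\ref{main6}, applied to the primitives of $\varphi$ and of $\varphi f$. Put $h(x)=\int_0^x\varphi(t)\,dt$ and $g(x)=\int_0^x\varphi(t)f(t)\,dt$; both functions are continuous, $h(0)=g(0)=0$, and $h$ is increasing because $\varphi\geq 0$. Before invoking the general machinery I would clear away the degenerate cases. If $C=\infty$ then $e^{-C}=0$ and (\ref{intm}) is trivial. If $h$ is constant on some interval $[0,\delta]$ (equivalently, $\varphi=0$ a.e. near $0$), then for $x\leq\delta$ the identity $\int_0^x\varphi f=0=f(\tau)\int_0^x\varphi$ holds for every $\tau$, so $\eta(x)=x$ and (\ref{intm}) holds again. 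In the remaining case $h(x)>0$ for all $x\in(0,1]$ and $C<\infty$, and a short measure-theoretic remark shows that $\varphi>0$ a.e. on some interval $[0,\delta]$: if the set $\{\varphi=0\}$ had positive measure in every $[0,\delta']$, then $\int_0^x\varphi(t)\,dt$ would be positive while $x\varphi(x)=0$ on a set of positive measure near $0$, contradicting $C<\infty$.

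Next I would verify the hypotheses of Theorem~\ref{main3.5} for the curve $\gamma(t)=(g(t),h(t))$ with $a=0$: $h$ is increasing and not constant near $0$, and
$$
\lim_{x\to 0}\frac{g(x)-g(0)}{h(x)-h(0)}=\lim_{x\to 0}\frac{\int_0^x\varphi(t)f(t)\,dt}{\int_0^x\varphi(t)\,dt}=f(0),
$$
which is finite, since this ratio differs from $f(0)$ by at most $\max_{[0,x]}|f-f(0)|\to 0$. Since $h'(x)=\varphi(x)$ for a.e. $x$ (Lebesgue differentiation of the integral), the constant $C$ of Theorem~\ref{main6} coincides with the constant in the statement of Theorem~\ref{main7}. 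The first inequality of Theorem~\ref{main6}, whose proof uses only the assumptions just checked (together with $C<\infty$ and the monotonicity of $h$, for Lemma~\ref{main5}), then gives
$$
\varlimsup_{x\to 0}\frac{\mu(x)}{x}\geq e^{-C},
$$
where $\mu(x)$ is the supremum of the numbers $\tau\in(0,x]$ that are points of local extremum of $\Phi_x(t):=g(x)h(t)-h(x)g(t)$.

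It remains to link $\mu$ with $\eta$, and I would prove $\eta(x)\geq\mu(x)$ for all small $x$. Writing $m(x)=g(x)/h(x)$ (legitimate since $h(x)>0$), one computes $\Phi_x(t)=h(x)\int_0^t\varphi(s)\bigl(m(x)-f(s)\bigr)\,ds$, hence $\Phi_x'(s)=h(x)\varphi(s)\bigl(m(x)-f(s)\bigr)$ a.e. I claim every point of local extremum $\tau_0\in(0,x]$ of $\Phi_x$ satisfies $f(\tau_0)=m(x)$. Indeed, if $f(\tau_0)\neq m(x)$ then, by continuity of $f$, the factor $m(x)-f(s)$ keeps a fixed nonzero sign on a whole neighbourhood of $\tau_0$, while $\varphi>0$ a.e. there (for $x$, hence $\tau_0$, so small that this neighbourhood stays inside $[0,\delta]$); so $\Phi_x$ is strictly monotone on a two-sided neighbourhood of $\tau_0$ — here it is essential that $\Phi_x$ is considered on the whole parameter interval, so that even $\tau_0=x$ has points of the domain on both sides — which contradicts the extremum property. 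Therefore $f(\tau_0)=m(x)$, i.e. $\tau_0$ satisfies the equation defining $\eta(x)$, so $\tau_0\leq\eta(x)$; passing to the supremum (and using $f(\mu(x))=\lim_n f(\tau_n)=m(x)$ if $\mu(x)$ is only a limit of extrema $\tau_n$) yields $\mu(x)\leq\eta(x)$. Combined with the previous display, this gives $\varlimsup_{x\to 0}\eta(x)/x\geq e^{-C}$, that is, (\ref{intm}).

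I expect the main obstacle to be this last step, and more precisely the bookkeeping of degeneracies. The inequality $\eta(x)\geq\mu(x)$ can genuinely fail when $\varphi$ vanishes on a subinterval: then $\Phi_x$ is locally constant, so $\mu(x)$ may jump up with no matching solution of the $\eta$-equation. One must therefore be certain that in every such situation one of the cheap arguments of the first paragraph applies ($C=\infty$, or $h$ locally constant near $0$). The careful handling of the right endpoint $\tau_0=x$ in the extremum analysis — relying on the fact that $\Phi_x$ lives on the full interval and not merely on $(0,x]$ — is a smaller but genuine point that is easy to overlook.
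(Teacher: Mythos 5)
Your proposal is correct and follows essentially the same route as the paper: reduce the degenerate cases, apply the $\mu$-part of Theorem \ref{main6} to $h(x)=\int_0^x\varphi(t)\,dt$, $g(x)=\int_0^x\varphi(t)f(t)\,dt$, and obtain $\eta(x)\geq\mu(x)$ by showing that every local extremum of the function (\ref{eq3.5}) satisfies the mean-value equation. The only (harmless) difference lies in that last step: the paper works under the weaker reduction that $\varphi$ is not a.e.\ zero on any small subinterval and argues via the integral mean value theorem applied to the increments $\Psi(\tau+\Delta)-\Psi(\tau)$, whereas you first extract $\varphi>0$ a.e.\ near $0$ from $C<\infty$ and then use strict monotonicity of $\Phi_x$ on a two-sided neighbourhood; both versions are sound.
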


\begin{proof}
We may assume that the function $t \mapsto \varphi(t)$ is not zero
almost everywhere in any neighborhood of the point $0$ (otherwise,
in such neighborhood the equality $\eta(x)=x$ holds, and all is
clear). By the same manner we may assume that for some
$\varepsilon >0$  the value $\varphi(t)$ is not zero almost everywhere on any interval
$[c,d]\subset (0,\varepsilon)$ of non-zero length (otherwise,
$C=\infty$, and nothing to prove).

Now, we define two functions $g,h :[0,1] \rightarrow \mathbb{R}$
by the formulas
$$ g(x)=\int\limits^x_0 \varphi(t) f(t) dt, \quad
h(x)=\int\limits^x_0 \varphi(t) dt.
$$
It is clear that the function $x\mapsto h(x)$ increases and is not
a constant in any neighborhood of $0$, $g(0)=h(0)=0$,
$\lim\limits_{x\to 0} \frac{g(x)-g(0)}{h(x)-h(0)}=f(0) \in
\mathbb{R}$. Therefore, we can apply the part of Theorem
\ref{main6}, dealing with the function $x \mapsto \mu(x)$, to
these two functions (in this case $a=0$, $b=1$). Now, it suffices
to verify the inequality
$$
\eta(x)\geq \mu(x)
$$
for all $x$ (sufficiently close to $0$). In turn, for this goal it
is enough to prove the following assertion for all $x$ sufficiently close to $0$:
Every $\tau \in (0,x]$ provided a local extremum to the function
$$
t \mapsto \Bigl(g(x)-g(0)\Bigr)h(t)-\Bigl(h(x)-h(0)\Bigr)g(t)=
$$
$$
\int\limits^x_0 \varphi(s) f(s) ds  \int\limits^t_0 \varphi(s) ds
-\int\limits^x_0 \varphi(s) ds  \int\limits^t_0 \varphi(s) f(s) ds
=: \Psi(t)\,,
$$
satisfies the equality $\int\limits^x_0 \varphi (s)f(s)ds=f(\tau
)\int\limits^x_0 \varphi (s)ds $.

Let us suppose the contrary.
Then by the integral mean value theorem we get
$$
\Psi(\tau+\Delta)-\Psi(\tau)=\int\limits^{\tau+\Delta}_{\tau}
\varphi(s) ds \left(\int\limits^x_0 \varphi(s) f(s) ds - f(\nu)
\int\limits^x_0 \varphi(s) ds \right),
$$
where $\nu$ is some number between ${\tau+\Delta}$ and ${\tau}$.
For sufficiently small $\Delta$ the sign of the expression in the
brackets coincides with the sign of the (non-zero by our
assumption!) expression $\int\limits^x_0 \varphi (s)f(s)ds-f(\tau
)\int\limits^x_0 \varphi (s)ds$. At the same time, the sign of
$\int\limits^{\tau+\Delta}_{\tau} \varphi(s) ds$ coincides with
the sign of $\Delta$ (at least for all $\tau \in (0,\varepsilon)$,
where $\varepsilon$ is the number discussed in the beginning
of the proof). This means that the point $\tau$ could not be a
point of local extremum of the function $t \mapsto \Psi(t)$. The
theorem is proved.
\end{proof}

Notice that one can find various generalizations and refinements
of the proved theorem in the papers \cite{MV2,MV4}.

\section{Open questions}

Note that there is another (but quite natural from a geometric
point of view) proximity estimation of points $\gamma(\tau)$, $\tau \in T(t)\, (S(t))$,
to a point $\gamma(t)$.
Suppose that the curve $\gamma: [a,b] \rightarrow \mathbb{E}^2$ is
continuous and rectifiable. Of course, this assumption essentially
narrows a class of curves under investigation. Let $L:[a,b] \rightarrow
\mathbb{R}$ be such that $L(t)$ is the length of an arc (of the
curve $\gamma$), corresponding to values of the parameter from the
interval $[a,t]$. It is clear that for every $t \in (a,b)$ the
inequality $0\leq L(\tau) \leq L(t)$ holds for all $\tau \in S(t)$
(for all $\tau \in T(t)$ in the case of differentiable curve),
therefore, $0\leq \sup \{L(\tau)\,|\, \tau \in S(t)\, \bigl(\tau
\in T(t)\bigr)\} \leq L(t)$. It is quite possible that the
following conjecture is true.

\begin{conj}\label{co2}
Let $\gamma: [a,b] \rightarrow \mathbb{E}^2$ be an arbitrary
continuous rectifiable parametric curve. Then the  inequality
$$ \varlimsup_{t\to a} \frac{\sup \{L(\tau)\,|\, \tau
\in S(t)\}}{L(t)}\geq \frac{1}{e} $$
holds.
\end{conj}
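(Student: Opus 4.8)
The plan is to imitate the proof of Theorem~\ref{main0} step by step, with the chord-distance function $D$ replaced by the arc-length function $L$, and then to isolate the exact place where a new ingredient is needed. First I would make the same reductions as in Section~3: assume $\gamma(t)\neq\gamma(a)$ near $a$, pass to the polar-angle parametrization so that $\theta(t)=t$ is strictly increasing, and write $\rho(t)=D(t)=e^{\phi(t)}$. Claims~\ref{cl1}--\ref{cl3} use only the set $S(t)$ and the function $\phi$, so they carry over verbatim: $\phi$ is differentiable almost everywhere on $(a,b)$, locally of bounded variation away from $a$, with $\phi(d)-\phi(c)\le\int_c^d\phi'$ on intervals in $(a,b)$; and at a point $u\in Sm$ the curve is differentiable with $|\gamma'(u)|=e^{\phi(u)}\sqrt{\phi'(u)^2+1}$. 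The first technical obstacle is the behaviour of $L$ itself: the argument wants $L$ to be locally absolutely continuous on $(a,b)$ with $L'(u)=|\gamma'(u)|$ almost everywhere, i.e. that the length measure of the curve, read in the polar-angle parameter, carries no singular part. For a merely continuous rectifiable curve this is not automatic, and controlling (or excluding) such a singular part near the relevant values of $t$ is a genuine hurdle; failing a general argument I would add it as a mild regularity assumption and proceed.

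Granting this, suppose the conjectured inequality fails: then there is $q>1$ with $L(\tau)/L(t)\le e^{-q}$ for all $t$ close to $a$ and all $\tau\in S(t)$, i.e. $\mathcal L(t)-\mathcal L(\tau)\ge q$ with $\mathcal L:=\ln L$. The analogue of Claim~\ref{cl4} goes through with the same device: if $\tau_0\in Sm$ and $t_0:=\tau_0+\alpha(\tau_0)<b$, where $\alpha(\tau)=\arccot(\phi'(\tau))\in(0,\pi)$, then $\tau_0$ is a critical point of $\tau\mapsto e^{\phi(\tau)}\sin(t_0-\tau)$; approximating the chord $[O,\gamma(t_0)]$ by chords $[\gamma(\tau_0),\gamma(\tau_n)]$, extracting $\eta_n\in S(t_n)$ between $\tau_0$ and $\tau_n$ as in Claim~\ref{cl4}, and letting $n\to\infty$ using the continuity of $L$, one gets that for every $\tau\in Sm$ either $\tau+\alpha(\tau)\ge b$ or $\mathcal L(\tau+\alpha(\tau))-\mathcal L(\tau)\ge q$. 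Since $L(u)\ge D(u)=e^{\phi(u)}$, one has $\mathcal L'(u)=|\gamma'(u)|/L(u)=r(u)/\sin\alpha(u)$ almost everywhere, where $r(u):=D(u)/L(u)\in(0,1]$; so the averaging argument of Claim~\ref{cl5} applied to $\mathcal L'$ (this uses that $\mathcal L$ is locally absolutely continuous, by the reduction above) yields some $\xi\in(\tau_0,\tau_0+\alpha(\tau_0))\cap Sm$ with
$$
q\ \le\ \mathcal L\bigl(\tau_0+\alpha(\tau_0)\bigr)-\mathcal L(\tau_0)\ \le\ \frac{\alpha(\tau_0)\,r(\xi)}{\sin\alpha(\xi)}\ \le\ \frac{\alpha(\tau_0)}{\sin\alpha(\xi)}.
$$

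This is the exact counterpart of the chain $q\le\alpha(\tau_0)\,\phi'(\xi)<\alpha(\tau_0)/\alpha(\xi)$ of Claim~\ref{cl5}, except that the clean inequality $\tan x>x$ (which gave $\phi'(\xi)=\cot\alpha(\xi)<1/\alpha(\xi)$) is replaced by $1/\sin x$, and $1/\sin x\ge 1/x$ points the wrong way. Discarding $r(\xi)$ one only gets $\sin\alpha(\xi)\le\alpha(\tau_0)/q$: when $\alpha(\xi)\le\pi/2$ this gives $\alpha(\xi)\le\tfrac{\pi}{2}\alpha(\tau_0)/q$, so the iteration of Claim~\ref{cl6} still closes --- but only when $q>\pi/2$, hence only with the weaker conclusion $\varlimsup\ge e^{-\pi/2}$; and when $\alpha(\xi)$ is close to $\pi$ (that is, $\phi'(\xi)$ very negative --- the curve momentarily running radially back towards $\gamma(a)$) the estimate gives no usable bound at all. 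This regime $\alpha\to\pi$ is, I expect, the essential difficulty: there the curve accumulates arc length without gaining in $D$, which is exactly where the arc-length and chord-length problems part ways. For the logarithmic spirals of Example~\ref{ex1}, for which the conjectured constant $1/e$ is only approached (not attained), even the naive estimate $L(\tau)/L(t)\ge\bigl(D(\tau)/L(t)\bigr)\bigl(DS(t)/D(t)\bigr)$ obtained from Theorem~\ref{main0} stays strictly below $1/e$ for every finite value of the pitch parameter, so it cannot suffice.

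To close the gap I would not discard the factor $r(\xi)$ but keep $\mathcal L'(u)=r(u)/\sin\alpha(u)$, exploit that $L-D$ is an increasing function (immediate from $L(t_2)-L(t_1)\ge|\gamma(t_2)-\gamma(t_1)|\ge D(t_2)-D(t_1)$), and bring in the already proven Theorem~\ref{main0}, trying to play the $D$-decay and the $L$-decay along the iterated sequence $\{t_n\}$ of Claim~\ref{cl6} against one another. An alternative is to reparametrize by arc length $v$ from the outset and introduce the angle $\beta$ between the radial and tangent directions, $\cos\beta(v)=dD/dv$ and $\sin\beta(v)=D(v)\,d\theta/dv$; then the support-point condition reads $\theta(v)-\theta(v')\equiv\beta(v')\pmod\pi$, and the hope is that the corresponding averaging estimate comes out sharp. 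In either route, once the key estimate is brought into the right form, the finite-$a$ versus $a=-\infty$ dichotomy and the final contradiction should follow the pattern of Claim~\ref{cl6} and the closing paragraphs of Section~3 without essential change.
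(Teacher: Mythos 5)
There is no proof of this statement to compare against: in the paper it is exactly Conjecture~\ref{co2}, posed as an open problem in the final section, and your text does not close it either --- by your own account it is an analysis of where the method of Theorem~\ref{main0} breaks down rather than a proof. The two gaps you flag are real and fatal as the argument stands. First, the local absolute continuity of $L$ in the polar-angle parameter is imported as a ``mild regularity assumption'', but it is not mild: for a merely continuous rectifiable curve the length measure can have a nontrivial singular part with respect to the angle parameter (length accumulated while $\theta$ is essentially stationary, e.g.\ along nearly radial oscillations), and this is precisely the regime $\alpha$ near $\pi$ that you later identify as the obstruction; assuming it away changes the statement being proved. Second, even granting that regularity, the engine of the paper's proof is the chain $q\le\alpha(t)\,\phi'(\xi)<\alpha(t)/\alpha(\xi)$ in Claim~\ref{cl5}, which feeds the geometric self-improvement of Claim~\ref{cl6}; your substitute $\mathcal{L}'=r/\sin\alpha$ only yields $\sin\alpha(\xi)\le\alpha(t)/q$, which (as you note) gives the iteration only for $q>\pi/2$ and $\alpha(\xi)\le\pi/2$, hence at best the constant $e^{-\pi/2}$, and gives nothing when $\alpha(\xi)$ is close to $\pi$. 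So the conjectured constant $1/e$ is not reached on any branch of your argument.

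The closing paragraph (keeping the factor $r(\xi)$, using monotonicity of $L-D$, playing Theorem~\ref{main0} against the iteration, or reparametrizing by arc length and the angle $\beta$) is a program, not an argument: no estimate is actually derived there, and the assertion that the finite-$a$ versus $a=-\infty$ dichotomy then ``follows the pattern'' of Claim~\ref{cl6} presupposes exactly the key inequality that is missing. Your diagnosis is, however, consistent with the paper's own remark about Conjecture~\ref{co3}: Theorem~\ref{main1} (and likewise Theorem~\ref{main0}) suffices only under the asymptotic relation~(\ref{eq6}), $L(t)\sim D(t)$ as $t\to a$, and the whole difficulty of Conjectures~\ref{co2}--\ref{co3} lies in curves for which that relation fails. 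In short: useful reconnaissance, correctly locating the obstruction, but not a proof --- and the paper offers none.
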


A version of this conjecture for
differentiable curves also has doubtless interest.

\begin{conj}\label{co3}
Let $\gamma: [a,b] \rightarrow \mathbb{E}^2$ be an arbitrary
continuous rectifiable parametric curve such that for every $t\in
(a,b)$ there is a non-zero derivative vector $\gamma^{\prime}(t)$.
then the inequality
$$ \varlimsup_{t\to a} \frac{\sup
\{L(\tau)\,|\, \tau \in T(t)\}}{L(t)}\geq \frac{1}{e}
$$
holds.
\end{conj}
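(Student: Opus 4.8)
The plan is to follow the architecture of the excerpt: first reduce Conjecture~\ref{co3} to its support-point version, then split the estimate into an easy and a hard regime according to the behaviour of $L(t)/D(t)$ as $t\to a$.

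\emph{Reduction and normalization.} Exactly as Theorem~\ref{main1} was deduced from Theorem~\ref{main0}: for a differentiable curve $S(t)\subset T(t)$, so $\sup\{L(\tau):\tau\in T(t)\}\ge\sup\{L(\tau):\tau\in S(t)\}$, and it suffices to prove the inequality of Conjecture~\ref{co2} for the (continuous, rectifiable) curve $\gamma$. As in Section~3 one first removes the degenerate sub-cases: if $\gamma(t)=\gamma(a)$ for $t$ arbitrarily near $a$ then $S(t)=(a,t]$ and the ratio equals $1$ along such $t$; if $a$ is a limit of local extrema of the polar angle $\theta$ then each such $t$ lies in $S(t)$, so the ratio is $\ge 1$. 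Hence assume $\gamma\neq\gamma(a)$ on $(a,b)$ and $\theta$ strictly increasing, pass to the polar-angle parametrization $\theta(t)=t$ with pole $\gamma(a)$, and write $\rho(t)=D(t)$, $\phi=\ln\rho$ (continuous, $\phi(a)=-\infty$); let $L(t)$ be the length of the arc over $[a,t]$ and $\Lambda=\ln L$ (continuous, increasing, $\Lambda(a)=-\infty$, $\Lambda\ge\phi$).

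\emph{Easy regime: $C(t):=L(t)/D(t)\to 1$.} Put $\tau^{*}(t)=\sup S(t)$. Since $L$ is continuous and increasing, $\sup\{L(\tau):\tau\in S(t)\}=L(\tau^{*}(t))$, and for every $\tau\in S(t)$ one has $\tau\le\tau^{*}(t)$ and hence $D(\tau)\le L(\tau)\le L(\tau^{*}(t))$. Therefore $DS(t)=\sup\{D(\tau):\tau\in S(t)\}\le L(\tau^{*}(t))=\sup\{L(\tau):\tau\in S(t)\}$, and dividing by $D(t)=L(t)/C(t)$,
$$
\frac{\sup\{L(\tau):\tau\in S(t)\}}{L(t)}\ \ge\ \frac{1}{C(t)}\cdot\frac{DS(t)}{D(t)}\,.
$$
Passing to $\varlimsup_{t\to a}$ and using $C(t)\to 1$ together with Theorem~\ref{main0} gives the bound $\ge 1/e$.

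\emph{Hard regime: $C(t)\not\to 1$ (spiralling or oscillating curves), and the obstacle.} Here one must argue directly, imitating Section~3 with $\Lambda$ in the role of $\phi$: assume the conclusion fails, so there is $q>1$ with $\Lambda(t)-\Lambda(\tau)\ge q$ for all $\tau\in S(t)$ and all $t$ near $a$. The soft parts still go through --- Claims~\ref{cl1}--\ref{cl4} (stated for $\phi$) survive under this assumption: in Claim~\ref{cl1} the absolute maximum $\eta_n$ of $F^{t}(\tau)=\phi(\tau)+\ln\sin(t-\tau)$ is a support point, and the assumption now gives $\Lambda(t)-\Lambda(\eta_n)\ge q$, contradicting $\eta_n\to t$ by continuity of $\Lambda$; Claim~\ref{cl4} similarly yields $\Lambda(\tau_0+\alpha(\tau_0))-\Lambda(\tau_0)\ge q$ with $\alpha=\arccot\phi^{\prime}$ as before. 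The difficulty appears at the telescoping step (Claim~\ref{cl5}): its proof needs $q\le\Lambda(s)-\Lambda(t)\le\int_t^s\Lambda^{\prime}$, i.e. the \emph{reverse} Lebesgue inequality, which in Section~3 holds for $\phi$ only because $\phi$ --- being precisely the function whose $e^{\phi}\sin(t-\cdot)$ detects support points --- satisfies Claims~\ref{cl1}--\ref{cl3} and thereby turns out absolutely continuous; the arc-length function $L$ bears no such relation to the support-point condition, so neither Claim~\ref{cl1} nor Claim~\ref{cl3} ports to $\Lambda$, and in the hard regime $\ln L$ and $\phi$ genuinely differ by an unbounded amount. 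Even granting a substitute for the telescoping, the natural estimate one can extract, $q\le\alpha(t)\,\Lambda^{\prime}(\xi)\le\alpha(t)\sqrt{1+\phi^{\prime}(\xi)^{2}}$ (using $L\ge D$), cannot be closed: $\arccot(\phi^{\prime}(\xi))\sqrt{1+\phi^{\prime}(\xi)^{2}}$, the analogue of the quantity $\alpha(\xi)\phi^{\prime}(\xi)<1$ used in Claim~\ref{cl5}, is unbounded as $\phi^{\prime}(\xi)\to-\infty$. I expect this decoupling --- the support-point condition living on $D$ while the quantity to be estimated lives on $L$ --- to be the genuine obstruction; closing it seems to require extra input, such as an a priori bound on the total turning of $\gamma$ near $a$ controlling $L$ in terms of $D$, or a comparison function interpolating between $D(\cdot)\sin(t-\cdot)$ and $L(\cdot)\sin(t-\cdot)$. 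That, rather than any routine computation, is where the real work lies, which is presumably why the statement is left as a conjecture.
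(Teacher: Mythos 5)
You have not proved the statement, and by your own admission the ``hard regime'' is left open --- but this is precisely the status the statement has in the paper: Conjecture~\ref{co3} appears in the section of open questions and is not proved there at all, so there is no paper proof to compare against. What you do establish coincides with what the paper itself records. Your ``easy regime'' $L(t)/D(t)\to 1$ is exactly the observation surrounding the asymptotic equality (\ref{eq6}): since $D(\tau)\le L(\tau)$ for all $\tau$, the bound $\sup\{L(\tau)\,|\,\tau\in S(t)\}\ge DS(t)$ together with Theorem~\ref{main0} (or Theorem~\ref{main1} for the $T(t)$-version) gives the inequality whenever $L(t)\sim D(t)$ as $t\to a$; the paper states this and derives from it only Corollary~\ref{co4} for regular smooth curves, where (\ref{eq6}) is automatic. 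Your diagnosis of the obstruction is also consistent with the paper's own remark that (\ref{eq6}) is not universal and that Theorem~\ref{main1} ``is not sufficient'' for Conjecture~\ref{co3}: the support-point condition is governed by $\rho=D$ (via $e^{\phi(\tau)}\sin(t-\tau)$), while the quantity to be estimated is governed by $L$, and the mechanism of Claim~\ref{cl5}, resting on $\alpha(\xi)\phi^{\prime}(\xi)<1$, has no analogue for $\Lambda=\ln L$, whose derivative $\sqrt{1+(\phi^{\prime})^{2}}$ (in the polar-angle parametrization) is not controlled by $\arccot(\phi^{\prime})$ when $\phi^{\prime}\to-\infty$.

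So the correct assessment is: your proposal contains a genuine gap --- the case where $L(t)/D(t)$ stays bounded away from $1$ (or is unbounded) is untouched, and the attempted transplant of Claims~\ref{cl1}--\ref{cl5} to $\Lambda$ fails exactly where you say it does --- but this gap is the open problem itself, not an oversight in reproducing a known argument. Your reduction to the support-point version (Conjecture~\ref{co2}) via $S(t)\subset T(t)$, the removal of the degenerate cases, and the partial result under (\ref{eq6}) are all correct and mirror the paper's discussion; beyond that the paper offers nothing, and neither, honestly, do you.
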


Obviously, the latter conjecture is true for all curves with the
property
\begin{equation}\label{eq6}
L(t) \sim D(t) \mbox{  as  } t\to a.
\end{equation}
However, this asymptotic equality is not universal, hence, the
result of Theorem \ref{main1} is not sufficient for studying  of
Conjecture \ref{co3}, it demands some special approach.
Nevertheless, it is well known that in the case of a smooth curve
$\gamma:[a,b]\rightarrow \mathbb{E}^2$ (when a curve has a continuous
derivative vector $\gamma^{\prime}(t)$, $t\in[a,b]$), the relation
(\ref{eq6}) is fulfilled. Hence, we get the following corollary
from Theorem \ref{main1}.

\begin{cor}\label{co4}
Let $\gamma: [a,b] \rightarrow \mathbb{E}^2$ be an arbitrary
regular smooth parametric curve. Then the inequality
$$
\varlimsup_{t\to a} \frac{\sup \{L(\tau)\,|\, \tau \in
T(t)\}}{L(t)}\geq \frac{1}{e}
$$
holds.
\end{cor}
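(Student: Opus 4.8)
The plan is to derive this directly from Theorem \ref{main1} by verifying that the arc length behaves asymptotically like the chord length near $t = a$, which reduces the statement about $\sup\{L(\tau)\,|\,\tau\in T(t)\}/L(t)$ to the already-proven statement about $DT(t)/D(t)$. First I would recall the classical fact that a regular $C^1$ curve $\gamma:[a,b]\rightarrow\mathbb{E}^2$ satisfies $L(t)\sim D(t)$ as $t\to a$; this is the relation (\ref{eq6}). Indeed, since $\gamma'$ is continuous and $\gamma'(a)\neq 0$, for $t$ close to $a$ the chord $\gamma(t)-\gamma(a)=\int_a^t\gamma'(s)\,ds$ and the length $L(t)=\int_a^t|\gamma'(s)|\,ds$ are both $\approx (t-a)|\gamma'(a)|$, so their ratio tends to $1$. (One does not even need the pointwise non-vanishing of $\gamma'$ on the whole interval for this local statement, only $\gamma'(a)\neq 0$, which regularity supplies.) I would state this as a short lemma or simply cite it as well known, since the excerpt already asserts it in the paragraph preceding the corollary.

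Next, I would combine $L(\tau)\sim D(\tau)$ (applied along any sequence $\tau\to a$) with $L(t)\sim D(t)$. The key point is that for the $\varlimsup$ we may pass to a sequence $t_n\to a$ realizing $\varlimsup_{t\to a} DT(t)/D(t)\geq 1/e$ from Theorem \ref{main1}, and for each $n$ choose $\tau_n\in T(t_n)$ with $D(\tau_n)$ close to $DT(t_n)$. Since $\tau_n\in(a,t_n]$ we have $\tau_n\to a$, hence $L(\tau_n)/D(\tau_n)\to 1$ and $L(t_n)/D(t_n)\to 1$. Therefore
\[
\frac{\sup\{L(\tau)\,|\,\tau\in T(t_n)\}}{L(t_n)}\ \geq\ \frac{L(\tau_n)}{L(t_n)}\ =\ \frac{L(\tau_n)}{D(\tau_n)}\cdot\frac{D(\tau_n)}{D(t_n)}\cdot\frac{D(t_n)}{L(t_n)},
\]
and taking $\varlimsup$ the outer factors tend to $1$ while the middle factor has $\varlimsup$ at least $1/e$ (after first letting the slack in the choice of $\tau_n$ go to zero), giving the claim. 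The one technical nicety is handling the degenerate case $D(t)=0$: by the conventions fixed before Theorem \ref{main0}, and by the same argument used at the start of the proof of Theorem \ref{main0}, such $t$ cannot accumulate at $a$ for a non-constant regular curve, so we may assume $D(t)>0$ near $a$ and all the ratios above are well defined.

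The main obstacle — which is in fact mild here — is making the two asymptotic equivalences interact correctly with the $\varlimsup$: one must be careful that $L(\tau)\sim D(\tau)$ is a statement uniform enough to be applied along the varying sequence $\tau_n\to a$ rather than for a single fixed limit, but since it is genuinely a limit as the argument tends to $a$, any sequence works, so there is no real difficulty. A secondary point is that $\sup\{L(\tau)\,|\,\tau\in T(t)\}$ need not be attained, which is why I choose $\tau_n$ with $D(\tau_n)\geq DT(t_n)-\varepsilon_n$ for a null sequence $\varepsilon_n$; the supremum on the left only makes the inequality easier. No new idea beyond Theorem \ref{main1} and the classical $L\sim D$ estimate is required, so the proof is genuinely a corollary.
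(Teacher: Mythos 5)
Your proposal is correct and follows essentially the same route as the paper, which simply observes that a regular smooth curve satisfies the asymptotic relation (\ref{eq6}), $L(t)\sim D(t)$ as $t\to a$, and then deduces the corollary from Theorem \ref{main1}; your sequence argument just makes this combination explicit. The only cosmetic point is that the slack in choosing $\tau_n$ should be taken multiplicatively (e.g. $D(\tau_n)\geq (1-1/n)DT(t_n)$) rather than as an absolute null sequence, since $DT(t_n)\to 0$, but this changes nothing essential.
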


Finally, we note the following independent
interesting problem: to find a convenient criterion for fulfillment
of the asymptotic equality (\ref{eq6}).

\vspace{20mm}

\end{document}